\title*{On $(\sigma,\tau)$-derivations of group algebra as category characters}
\titlerunning{On $(\sigma,\tau)$-derivations of group algebra as category characters}
\author{Aleksandr Alekseev \and Andronick Arutyunov \and Sergei Silvestrov }
\authorrunning{Aleksandr Alekseev, Andronick Arutyunov, Sergei Silvestrov}
\institute{Aleksandr Alekseev \at V.A. Trapeznikov Institute of Control Sciences of RAS, 65 Profsoyuznaya street, 117997, Moscow, Russia. \\
\email{aleksandr.alekseev@frtk.ru}
\and
Andronick Arutyunov \at
Department of Higher Mathematics, Moscow Institute of Physics and Technology, 9 Institutski
per., Dolgoprudny, 141701, Russia \\ \email{andronick.arutyunov@gmail.com}
\and
Sergei Silvestrov \at Division of Applied Mathematics,
School of Education, Culture and Communication,
M\"{a}lardalen University, Box 883, 72123 V{\"a}ster{\aa}s, Sweden. \\
\email{sergei.silvestrov@mdh.se}
}
\begin{document}

\maketitle
\label{Chap:AlekseevArutyunovSilvestrov}

\abstract*{For the space of $(\sigma,\tau)$-derivations of the group algebra $ \mathbb{C} [G] $ of discrete countable group $G$, the decomposition theorem  for the space of $(\sigma,\tau)$-derivations, generalising the corresponding theorem on ordinary derivations on group algebras, is established in an algebraic context using groupoids and characters.  Several corollaries and examples describing when all $(\sigma,\tau)$-derivations are inner are obtained. Considered in details cases on $(\sigma,\tau)-$nilpotent groups and $(\sigma,\tau)$-$FC$ groups.
\keywords{group algebra, derivation, $(\sigma,\tau)$-derivation, groupoid, character} \\
{\bf MSC2020 Classification:} 16W25, 13N15, 16S34}

\abstract{For the space of $(\sigma,\tau)$-derivations of the group algebra $ \mathbb{C} [G] $ of a discrete countable group $G$, the decomposition theorem  for the space of $(\sigma,\tau)$-derivations, generalising the corresponding theorem on ordinary derivations on group algebras, is established in an algebraic context using groupoids and characters.  Several corollaries and examples describing when all $(\sigma,\tau)$-derivations are inner are obtained. Considered in details cases on $(\sigma,\tau)-$nilpotent groups and $(\sigma,\tau)-$$FC$ groups.
\keywords{group algebra, derivation, $(\sigma,\tau)$-derivation, groupoid, character} \\
{\bf MSC2020 Classification:} 16W25, 13N15, 16S34}

\section{Introduction}
The general theory of derivations for $C^*$-algebras, $W^*$-algebras, Banach, normed and topological algebras, motivated by many parts of Mathematics and Mathematical Physics, has developed since 1950'th. Fundamental derivation theorems describing conditions for all or almost all derivations being inner, constructions of outer derivations and related cohomology methods and generalizations have been developed for $C^*$-algebras, $W^*$-algebras and some related classes of Banach, normed and topological algebras and their representations \cite{AEPT76,AP79,BHKS20,Bratteli86SLN1229derivgract,Dales,Elliot77,Helemskii8689HomolBanTopAlg,JS68,JR69,JP72,J72MAMSCohomBanachalg,Kad66,KadRing66,KadRing67,K53,K58,Ped19792018,P87,S60,S66,S68,S71,S91}.

For group algebras, the {\it derivation problem} is formulated as follows:  "Under what conditions all derivations in a group algebra are inner?". There are many kinds of group algebras based on algebraic structure, topological structure, measure structures and choices of function spaces for the group algebra elements. For the group algebra $L_1(G)$, the derivation problem is important for investigations in measure theory and harmonic analysis, operator theory, operator algebras and cohomological  constructions \cite{Dales}[question 5.6.B, p. 746]. The derivation problem for $L_1(G)$ of a locally compact group $G$ was considered in \cite{Losert}, where it was mentioned that all derivations of $L_1(G)$ are inner.

It is important to note that in the cited publications derivations are considered in topological context of the classes of algebras and modules equipped with normed or more general topological structures. If we consider the problem in algebraic way it is easy to find examples of non-inner derivations \cite{Ar,AMS16,AM}. Algebraic view on the derivation problem is also presented in \cite{ABS20} together with more complete bibliography.

We consider in this article $(\sigma,\tau)$-derivations, the linear operators on an associative algebra satisfying a generalized Leibniz rule $ D(xy)=D(x)\tau(y)+\sigma(x)D(y)$ twisted by two linear maps $\sigma$ and $\tau$.
The $(\sigma,\tau)$-derivation operators include, for example, the ordinary derivations on commutative and non-commutative algebras, the $q$-difference and $(p,q)$-difference operators on algebras of functions,
the super-derivations, graded colored derivations and $q$-derivation on graded associative algebras.
Since 1930's, $(\sigma,\tau)$-derivations and subclass of $\sigma$-derivations have been shown to play a fundamental role in the theory of Ore extensions and iterated Ore extension rings and algebras, skew polynomial algebras and skew fields, Noetherian rings and algebras, differential and difference algebra, homological algebra, Lie algebras and Lie groups, Lie superalgebras and colored Lie algebras, operator algebras, non-commutative geometry, quantum groups and quantum algebras, differential geometry, symbolic algebra computations and algorithms $q$-analysis and $q$-special functions and numerical analysis. \cite{BerIntroalgananticomvar,BoLvHarNoncomrings,Cohn65Differencealg,Cohn77Skewfieldsconstr,Jordan93,GoodWar89,Kharchenko8991AutDerivAssrings,KacCheung2002,Kasselbookqg99,LamLeroy88,Manin91,McConnelRobson2001,MRRSS90,Ore33,Rosenberg95}.
The space of $(\sigma,\tau)$-derivations and subspace of $\sigma$-derivations have been recently used in \cite{HartLarsSilv,LarsSilv:quasihomliecentext2cocycle,RichardSilvestrovJA2008,LarsSilv:quasidefsl2twistderiv,ElchLundMakhSilv,SongWuXin2015,SongXia2011,ChaudhuriComAlg2019}
in general constructions of quasi-Hom-Lie algebras and their central extensions, extending Witt and Virasoro Lie algebras to context of $(\sigma,\tau)$-derivations, with special emphasize on $(\sigma,\tau)$-derivations and $\sigma$-derivations on commutative algebras such as unique factorization domains, algebras of polynomials, Laurent polynomials and truncated algebras of polynomials in one or several variables.

In this paper, we consider $(\sigma,\tau)$-derivations and $\sigma$-derivations of the group algebra
$\mathbb{C} [G]$ of a discrete countable group $G$. We apply the approach proposed in \cite{AMS16,Ar,AM,Ar-Al,Mis} for the description of derivations in group algebras to study the space of $(\sigma,\tau)$-derivations of the group algebras.

Section \ref{sec:definprelim} contains general definitions and preliminaries on $(\sigma,\tau)$-derivations and group algebras considered in the article.
In Section \ref{sec:groupoidandcharacters}, we construct a groupoid $\Gamma$ associated with the group algebra and the pair of maps $(\sigma,\tau)$ in the $(\sigma,\tau)$-twisted Leibniz rule for $(\sigma,\tau)$-derivations, $(\sigma,\tau)$-conjugacy classes and characters on this groupoid.
We also construct an isomorphism between the space of $(\sigma,\tau)$-derivations on a group algebra of countable group and the space of locally finite characters on the associated groupoid (Theorem \ref{th-char-der}). In Section \ref{sec:quasiinnersigmatauder}, we define and describe quasi-inner $(\sigma,\tau)$-derivations, as well as a class of not quasi-inner derivations. We prove Theorem \ref{th-qinn-calc} which describes the view of quasi-inner $(\sigma,\tau)$-derivations. In Section \ref{sigma-tau-section} we consider the case of $(\sigma,\tau)$-nilpotent groups and obtain a description of the $(\sigma,\tau)$-derivation algebra (see Theorem \ref{th-decomposition}). For the case of inner endomorphisms and a Heisenberg group, we calculate the $(\sigma,\tau)$-derivations in the group algebra (see results in subsection \ref{subsec-Heis}). The Section \ref{sec:sigmatauFCgroups} is dedicated to the case of $(\sigma,\tau)-$$FC$ groups which are natural generalization of usual $FC$-groups on our ''twisted'' case of $(\sigma,\tau)$-derivations. For this class of groups we show some simple properties (see Proposition \ref{prop-fc-sigmasigma}) and construct a description of $(\sigma,\tau)$-derivations (Theorem \ref{th-FC}).


\section{General definitions and preliminaries} \label{sec:definprelim}
In this section, we recall some basic definitions and properties of main objects studied in the rest of the article.
\begin{definition}
Let $\mathcal{A}$ be an associative algebra over a field $\mathcal{K}$ and $(\sigma,\tau)$ is a pair of $\mathcal{K}$-linear endomorphisms of $\mathcal{A}$.
A \emph{$(\sigma,\tau)$-derivation} $D : \mathcal{A} \to \mathcal{A}$ is an $\mathcal{K}$-linear map such that the following twisted by $(\sigma,\tau)$ generalized Leibniz identity
\begin{equation}
D(ab)=D(a)\tau(b) + \sigma(a)D(b) \label{sigmatauderLeibniz}
\end{equation}
is satisfied for all $a,b \in \mathcal{A}$.
If $\tau=id_{\mathcal{A}}$, the generalized Leibniz identity is twisted by one map $\sigma$ as follows
\begin{equation}
D(ab)=D(a)b + \sigma(a)D(b), \label{sigmaderLeibniz}
\end{equation}
and the $(\sigma,id_{\mathcal{A}})$-derivation $D$ is called \emph{$\sigma$-derivation}.
If $\tau=\sigma=id_{\mathcal{A}}$, then the usual Leibniz identity
\begin{equation}
D(ab)=D(a)b + aD(b), \label{derLeibniz}
\end{equation}
holds, and $D$ is called a derivation on $\mathcal{A}$.

The set of all $(\sigma,\tau)$-derivations on $\mathcal{A}$ is denoted by $\mathcal{D}_{(\sigma,\tau)}(\mathcal{A})$.
The set of all $\sigma$-derivations is denoted by $\mathcal{D}_{\sigma}(\mathcal{A})$, and the set of all derivations is denoted by $\mathcal{D}(\mathcal{A})$
\end{definition}

The set $\mathcal{D}_{(\sigma,\tau)}(\mathcal{A})$ of $(\sigma,\tau)$-derivations on $\mathcal{A}$ is a linear subspace of the space of linear operators on $\mathcal{A}$ as a $\mathcal{K}$-linear space.

For any $(\sigma,\tau)$-derivation on $\mathcal{A}$,
\begin{multline*}
D(a(bc)-(ab)c)=D(a(bc))-D((ab)c)=\\
=D(a)\tau(bc)+\sigma(a)D(bc)-\big(D(ab)\tau(c)+\sigma(ab)D(c)\big)=\\
=D(a)\tau(bc)+\sigma(a)\big(D(b)\tau(c)+\sigma(b)D(c)\big)\\
\quad-\big(D(a)\tau(b)+\sigma(a)D(b)\big)\tau(c)-\sigma(ab)D(c)=\\
=D(a)\big(\tau(bc)-\tau(b)\tau(c)\big)+\big(\sigma(a)\sigma(b)-\sigma(ab)\big)D(c).
\end{multline*}
Since $\mathcal{A}$ is associative, $a(bc)-(ab)c=0$, and since $D$ is linear, $D(a(bc)-(ab)c) = 0$,
and hence
\begin{equation} \label{assocsigmatauderiv}
D(a)\big(\tau(bc)-\tau(b)\tau(c)\big)+\big(\sigma(a)\sigma(b)-\sigma(ab)\big)D(c) = 0
\end{equation}
Note that for the identity element $e$, in general,
$
D(e) = D(e \cdot e) = D(e)\tau(e) + \sigma(e)D(e),
$
or equivalently,
$
D(e)(e-\tau(e)) = \sigma(e)D(e),
$
implying that if $\tau(e)=\sigma(e)= e $ (as for example for group endomorphisms), then $D(e) = 0$.

In this article we will be interested only in $(\sigma,\tau)$-derivations $D$ with $\sigma, \tau \in \textbf{End}(G)$ and $D (e) = 0$.

If $\mathcal{A}$ is an associative algebra over a field $\mathcal{K}$,
and $\sigma$ and $\tau$ are algebra endomorphisms on $\mathcal{A}$, then
for any $p \in \mathcal{A}$ the map $\delta_p: \mathcal{A} \rightarrow \mathcal{A}$ given by the $(\sigma,\tau)$-twisted generalised commutator $\delta_p(x)= p\tau(x) - \sigma(x)p$, is a $(\sigma,\tau)$-derivation on $\mathcal{A}$, since for each $x, y \in \mathcal{A}$, we have
\begin{multline*}
\delta_p(xy) = p\tau(xy) - \sigma(xy)p = p\tau(x)\tau(y) - \sigma(x)\sigma(y)p = \\
= p\tau(x)\tau(y) - \sigma(x)p\tau(y) + \sigma(x)p\tau(y) - \sigma(x)\sigma(y)p = \\
= \delta_p(x)\tau (y) + \sigma(x)\delta_p(y).
\end{multline*}
\begin{definition}[inner $(\sigma,\tau)$-derivation]
If $\mathcal{A}$ is an associative algebra, and $\sigma$ and $\tau$ are algebra endomorphisms on $\mathcal{A}$, then $(\sigma,\tau)$-derivations $\delta_p$ for $p \in \mathcal{A}$ are called the inner $(\sigma,\tau)$-derivations of $\mathcal{A}$.
\end{definition}

Throughout this article, the group algebra $\mathbb{K} [G]$ of a group $(G,\cdot)$
over the field $\mathbb{K}$ means the linear space of mappings
$f: G \to \mathbb{K}$ of finite support with the pointwise operations of multiplication by scalars and addition, and the algebra product defined as convolution
\begin{equation}
(f*g)(x)= \sum _{u\cdot v=x}f(u)g(v)=\sum _{u\in G}f(u)g(u^{-1}x).
\end{equation}
where all sums are finite because f and g are of finite support.
With these operations, the group algebra is an unital associative algebra with the algebra unity coinciding with the indicator function $I_e$ of the group unity $e\in G$, that is $I(e)= 1_{\mathbb{K}}$ and $I(e)=0_{\mathbb{K}}$ otherwise.
The elements $f\in \mathbb{K} [G]$ often are conveniently presented as the formal linear combinations of elements of $G$ with coefficients in $\mathbb{K}$ written as $\sum_{g\in G}f(g)g$ or $\sum_{g\in G}f_{g}g$ similar to usual way of writing polynomials and Laurent polynomials.

\section{Groupoid and characters}
\label{sec:groupoidandcharacters}
For any mapping $D: \mathbb{C}[G]\rightarrow \mathbb{C} [G]$ and any $g\in G$,
the element $D(g) \in \mathbb{C} [G]$ can be written as
$$
D(g) = \sum\limits_{h \in G} \lambda^{h}_g h, \quad \quad \lambda^{h}_g \in \mathbb{C}.$$
When $D:\mathbb{C} [G]\rightarrow \mathbb{C} [G]$ is $(\sigma,\tau)$-derivation,
the $(\sigma,\tau)$-twisted Leibniz rule \eqref{sigmatauderLeibniz} for
$g_1,g_2 \in G \hookrightarrow \mathbb{C} [G]$ becomes
\begin{align}
D(g_2g_1) &= D(g_2)\tau(g_1) + \sigma(g_2)D(g_1), \nonumber \\
\label{Leibnizcomponent}
\lambda^h_{g_2g_1} &= \lambda^{h\tau(g_1^{-1})}_{g_2} + \lambda^{\sigma(g_2^{-1})h}_{g_1}.
\end{align}

Here we apply an approach developed in \cite{AMS16,AM,Mis,Ar-Al} to describe derivation in terms of groupoid geometrical properties. We construct a groupoid $\Gamma$ associated with the group algebra in the following way:

\begin{itemize}
\item $Obj(\Gamma) = G$
\item For all $a$ and $b \in Obj(\Gamma)$ a set of maps is \textbf{Hom} $(a, b) = \{(u, v)\in G \times G \, | \, \sigma(v^{-1})u = a, u\tau(v^{-1}) = b\}$
\item A composition of maps $\varphi = (u_1, v_1) \in$ \textbf{Hom} $(a, b)$, $\psi = (u_2, v_2) \in$ \textbf{Hom} $(b, c)$ is a map $\varphi \circ \psi \in$ \textbf{Hom} $(a, c)$, such that
\begin{equation}
\label{eq-composition-morph}
	\varphi \circ \psi = (u_2\tau(v_1), v_2v_1).
\end{equation}
\end{itemize}

\begin{remark}
	We do not have in formula \eqref{eq-composition-morph} the map $\sigma$ explicitly. But if the composition $\varphi\circ\psi$ exists, then $t(\psi) = s(\varphi)$. This means that
	$$
	 \sigma(v_1^{-1})u_1 = u_2\tau(v_2^{-1}).
	$$
	The last formula gives us a connection to map $\sigma$ in \eqref{eq-composition-morph}.
\end{remark}

We will be interested in some internal structure issues of the groupoid $\Gamma$. If morphism $(u,v)\in Hom(a,b)$, then $u=\sigma(v)a$ and $u\tau(v^{-1})=b$. Therefore we have
 $$
	b = \sigma(v) a \tau(v^{-1}).
$$
It gives the following description of subgroupoid $\Gamma_{[a]}$
\begin{equation}
	Obj (\Gamma_{[a]}) = \{\sigma(v) a \tau(v^{-1})|v\in G\}.
\end{equation}

\begin{definition}
\label{def-st-conj-class}
	A subset $[a]_{\sigma,\tau} = \{\sigma(g^{-1})a\tau(g) \, | \, g \in G\}$ is called a $(\sigma,\tau)$-conjugation class of the $u$.
	
\end{definition}

The set of $(\sigma,\tau)$-conjugation classes is denoted $G^{(\sigma,\tau)}$. If two $(\sigma,\tau)-$conjugacy classes intersect, then they coincide. So the set of group elements is represented as a disjoint union of $(\sigma,\tau)-$conjugacy classes
$$
	\{G\} =  \underset{[u]_{\sigma,\tau}\in G^{(\sigma,\tau)}}{\bigsqcup} {[u]_{\sigma,\tau}}.
$$

The set $[a]_{(\sigma,\tau)}:=\{\sigma(v) a \tau(v^{-1})\,|\, v\in G\}$ can be understood as $(\sigma,\tau)$ analogue of conjugacy class. If the $(\sigma,\tau)$-class $[a]$ contains just one element, then we will say that $a$ is a $(\sigma,\tau)$-central element of $G$.

It is easy to see that $\Gamma$ as disjoint union of the groupoids $\Gamma_{[u]_{\sigma,\tau}}$ (see \cite{AMS16}, \cite{Ar}):
\begin{equation}
\label{eq-Gamma-decomp}
	\Gamma = \underset{[u]_{\sigma,\tau}\in G^{(\sigma,\tau)}}{\bigsqcup} \Gamma_{[u]_{\sigma,\tau}}
\end{equation}

\begin{definition}
	A linear map $\chi : \textbf{Hom} (\Gamma) \rightarrow \mathbb{C}$, such that
	\begin{equation} \label{characterproperty}
	\chi(\varphi \circ \psi) = \chi(\varphi) + \chi(\psi),
	\end{equation}
	is called a character on the groupoid $\Gamma$.
\end{definition}

Characters with natural sum operations and multiplication by scalar form a vector space. It's naturally to define the support of character $\chi$ in following way
\begin{equation}
	supp \chi = \{\varphi\in Hom(\Gamma)|\chi(\varphi)\neq 0 \}.
\end{equation}

 Further we will be interested just in those characters which are connected with derivations.

\begin{definition}
	The character $\chi$ such that, for fixed $v \in Obj(C) \,$, $\chi(u, v) = 0$ for almost all $u \in Obj(C)$ is called a locally finite character.
\end{definition}

Let $X(\Gamma)$ be a space of the all locally finite characters on $\Gamma$. From formula \eqref{eq-Gamma-decomp} we get the decomposition of the space $X(\Gamma)$ in following way
\begin{equation}
\label{eq-decomp-char-supports}
	X(\Gamma) = \bigoplus\limits_{[u]_{\sigma,\tau}\in G^{(\sigma,\tau)}} X(\Gamma_{[u]_{\sigma,\tau}}),
\end{equation}
where $X(\Gamma_{[u]_{\sigma,\tau}})$ denotes the locally finite characters supported in $\Gamma_{[u]_{\sigma,\tau}}$.

Consider a map $\Psi : \mathcal{D}_{(\sigma,\tau)}({\mathbb{C}[G]}) \rightarrow X(\Gamma)$, such that if $D(g) = \sum\limits_{h \in G} \lambda^{h}_g h$, then $ \Psi(D)(h, g) = \lambda^{h}_g$. The map $\Psi^{-1}$ is constructed in the same way, $$\Psi^{-1}(\chi)(g) = \sum\limits_{h \in G} \chi(h, g) h.$$

\begin{theorem}
\label{th-char-der}
		Consider the discrete countable group $G$ with $\sigma, \tau \in \textbf{End}(G)$. Then, the map $\Psi : \mathcal{D}_{(\sigma,\tau)}({\mathbb{C}[G]}) \rightarrow X(\Gamma)$ is an isomorphism.
\end{theorem}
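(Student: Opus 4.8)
The plan is to exploit the fact that $\Psi$ is visibly $\mathbb{C}$-linear: since $\Psi(D)(h,g)=\lambda^h_g$ is the coefficient of $h$ in $D(g)$, it depends linearly on $D$, so the entire content of the theorem reduces to showing that $\Psi$ is a well-defined bijection with the stated inverse. A preliminary observation simplifies the setup: every pair $(u,v)\in G\times G$ is a morphism of $\Gamma$, with source $s(u,v)=\sigma(v^{-1})u$ and target $t(u,v)=u\tau(v^{-1})$, so $\textbf{Hom}(\Gamma)=G\times G$ as a set and $\Psi(D)$ is automatically defined on all of $\textbf{Hom}(\Gamma)$.

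First I would verify that $\Psi(D)$ lands in $X(\Gamma)$. Local finiteness is immediate: for fixed $v=g$, the numbers $\chi(u,g)=\lambda^u_g$ are the coefficients of $D(g)\in\mathbb{C}[G]$, which has finite support, so $\chi(u,g)=0$ for almost all $u$. The character identity \eqref{characterproperty} is the crux, and here I would set up the precise dictionary between composition in $\Gamma$ and the component Leibniz rule \eqref{Leibnizcomponent}. Given $g_1,g_2,h\in G$, put $\varphi=(\sigma(g_2^{-1})h,\,g_1)$ and $\psi=(h\tau(g_1^{-1}),\,g_2)$; one checks $t(\varphi)=\sigma(g_2^{-1})h\tau(g_1^{-1})=s(\psi)$, so the pair is composable, and by \eqref{eq-composition-morph} the composite is $\varphi\circ\psi=(h\tau(g_1^{-1})\tau(g_1),\,g_2g_1)=(h,g_2g_1)$. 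Conversely every composable pair $((u_1,v_1),(u_2,v_2))$ arises this way, via $(h,g_1,g_2)=(u_2\tau(v_1),v_1,v_2)$, so this assignment is a bijection between composable pairs and triples in $G^3$. Under this correspondence \eqref{characterproperty} reads $\chi(h,g_2g_1)=\chi(\sigma(g_2^{-1})h,g_1)+\chi(h\tau(g_1^{-1}),g_2)$, which is exactly \eqref{Leibnizcomponent}; hence $\Psi(D)$ is a character precisely because $D$ satisfies the $(\sigma,\tau)$-Leibniz rule.

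Next I would treat $\Psi^{-1}$. Given a locally finite character $\chi$, define $\Psi^{-1}(\chi)$ on the basis $G\subset\mathbb{C}[G]$ by $\Psi^{-1}(\chi)(g)=\sum_h\chi(h,g)h$ and extend linearly; local finiteness guarantees each such sum is finitely supported, so the map takes values in $\mathbb{C}[G]$. Because both sides of the Leibniz identity \eqref{sigmatauderLeibniz} are bilinear and $\Psi^{-1}(\chi)$ is linear, it suffices to verify the rule on pairs of group elements, and on such pairs it is again the equality \eqref{Leibnizcomponent}, which now holds because $\chi$ satisfies \eqref{characterproperty}---the same dictionary read in reverse. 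The identities $\Psi^{-1}\circ\Psi=\mathrm{id}$ and $\Psi\circ\Psi^{-1}=\mathrm{id}$ are then immediate from the definitions, so $\Psi$ is a linear bijection.

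I expect the single genuine obstacle to be the bookkeeping in the dictionary of the second paragraph: one must pin down $\varphi$ and $\psi$ so that the composite is $(h,g_2g_1)$ and confirm that this assignment exhausts all composable pairs, since only then is the additivity of $\chi$ over all of $\textbf{Hom}(\Gamma)$ equivalent to the Leibniz rule for all $g_1,g_2,h$. A small consistency point worth recording is that the standing assumption $D(e)=0$ is not an extra constraint: the identity morphisms of $\Gamma$ are $\mathrm{id}_a=(a,e)$, and applying \eqref{characterproperty} to $\mathrm{id}_a\circ\mathrm{id}_a=\mathrm{id}_a$ forces $\chi(a,e)=0$ for every $a$, i.e.\ $D(e)=0$, so the two descriptions match on the nose.
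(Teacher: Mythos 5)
Your proof is correct and follows essentially the same route as the paper's: the same composable pair whose composite is $(h,g_2g_1)$, local finiteness from the finite support of $D(g)$, and the same explicit inverse $\Psi^{-1}$. You additionally verify that the dictionary exhausts all composable pairs (which is what makes $\Psi^{-1}(\chi)$ genuinely a $(\sigma,\tau)$-derivation, a step the paper leaves implicit), and your ordering of the two factors is the one actually consistent with the composition convention \eqref{eq-composition-morph}.
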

\begin{proof}
		On the one hand, we show that $\Psi(D) \in X(\Gamma)$. Due to the definition of the groupoid $\Gamma$, there exists a following composition of maps:
		$$
		(h\tau(g_1^{-1}), g_2) \circ (\sigma(g_2^{-1})h, g_1) = (h, g_2g_1).
		$$
		Using the equation \eqref{Leibnizcomponent} one obtain:
		\begin{multline}
		\Psi(D)(h, g_2g_1) = \lambda^h_{g_2g_1} = \\
		= \lambda^{h\tau(g_1^{-1})}_{g_2} + \lambda^{\sigma(g_2^{-1})h}_{g_1} =  \Psi(D)(h\tau(g_1^{-1}), g_2) + \Psi(D)(\sigma(g_2^{-1})h, g_1).
		\end{multline}
		The latter equation means that $\Psi(D)$ satisfies the property \eqref{characterproperty}. Thus, $\Psi(D) \in X(\Gamma)$.
		
		On the other hand, due to the property of locally finiteness,
		$$
		\Psi^{-1}(\chi)(g) = \sum\limits_{h \in G} \chi(h, g) h \in \mathbb{C}[G],
		$$
		and $\Psi \Psi^{-1} = \operatorname{Id}_{X(\Gamma)}$, $\Psi^{-1}\Psi = \operatorname{Id}_{\mathcal{D}_{(\sigma,\tau)}({\mathbb{C}[G]})}$.
\end{proof}

\section{Quasi-inner ($\sigma,\tau$)-derivations}
\label{sec:quasiinnersigmatauder}

Recall that inner $(\sigma,\tau)$-derivation $\delta_p$ is given by formula
$$
	\delta_p: x\mapsto p\tau(x) - \sigma(x) p.
$$
The corresponding by Theorem \ref{th-char-der} character is trivial on loops.

\begin{proposition}
	For the given inner $(\sigma,\tau)$-derivation $\delta_p$ a corresponding character $\Psi(\delta_p)$ is trivial on loops, in the other words $\forall a \in Obj(C)$ and $\forall \varphi \in \textbf{Hom} (a, a)$ the value $\Psi(\delta_p)(\varphi) = 0$.
\end{proposition}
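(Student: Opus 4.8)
The plan is to compute the character $\Psi(\delta_p)$ explicitly on a general morphism and then impose the loop condition; the whole statement will drop out of a short coefficient calculation. First I would expand $p = \sum_{k \in G} p_k\, k$ with finitely many nonzero $p_k \in \mathbb{C}$, and evaluate $\delta_p$ on a basis element $g \hookrightarrow \mathbb{C}[G]$:
\begin{equation*}
\delta_p(g) = p\tau(g) - \sigma(g)p = \sum_{k \in G} p_k\, k\tau(g) - \sum_{k \in G} p_k\, \sigma(g)k.
\end{equation*}
Since $\sigma$ and $\tau$ are group endomorphisms, $\tau(g)$ and $\sigma(g)$ are group elements, so each summand is again a basis element of $\mathbb{C}[G]$. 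Collecting the coefficient of a fixed $h \in G$, by solving $k\tau(g) = h$ and $\sigma(g)k = h$ for $k$, gives
\begin{equation*}
\lambda^h_g = p_{h\tau(g^{-1})} - p_{\sigma(g^{-1})h},
\end{equation*}
and hence $\Psi(\delta_p)(h,g) = p_{h\tau(g^{-1})} - p_{\sigma(g^{-1})h}$.

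Next I would read off what this means on a morphism. For $\varphi = (u,v) \in \textbf{Hom}(a,b)$ the character value is $\Psi(\delta_p)(u,v) = p_{u\tau(v^{-1})} - p_{\sigma(v^{-1})u}$. By the defining relations of $\textbf{Hom}(a,b)$ the two subscripts are exactly the target and source of $\varphi$, namely $u\tau(v^{-1}) = b$ and $\sigma(v^{-1})u = a$. Therefore $\Psi(\delta_p)(\varphi) = p_b - p_a$ depends only on the endpoints of $\varphi$. Specializing to a loop $\varphi \in \textbf{Hom}(a,a)$ forces $a = b$, whence $\Psi(\delta_p)(\varphi) = p_a - p_a = 0$, which is precisely the claim.

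I expect the only delicate point to be the bookkeeping in the middle step: correctly matching the convention $\Psi(D)(h,g) = \lambda^h_g$ against the labelling $(u,v)$ of morphisms, and verifying that the subscripts $u\tau(v^{-1})$ and $\sigma(v^{-1})u$ coincide with the target $b$ and source $a$ recorded in the definition of $\textbf{Hom}(a,b)$. Once this identification is made the cancellation is immediate, so there is no substantive obstacle beyond careful index tracking. It is also worth remarking that the intermediate identity $\Psi(\delta_p)(\varphi) = p_b - p_a$ shows more than the stated proposition, since it exhibits $\Psi(\delta_p)$ as a ``coboundary'' type character determined by the endpoint potential $a \mapsto p_a$, which may be convenient to reuse later.
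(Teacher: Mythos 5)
Your proof is correct, and it is in fact more complete than the one printed in the paper. The paper's own argument only really treats the case where $p$ is a single group element (the coefficients $\lambda^{p\tau(g)}_g$ and $\lambda^{\sigma(g)p}_g$ there are $1$ precisely because $p$ is taken to be a basis element, and the notation moreover conflates $p$ with the object at which the loop sits), whereas you expand a general $p=\sum_k p_k\,k\in\mathbb{C}[G]$ and carry the computation through by linearity. Your index bookkeeping checks out: the coefficient of $h$ in $p\tau(g)-\sigma(g)p$ is indeed $p_{h\tau(g^{-1})}-p_{\sigma(g^{-1})h}$, and under the paper's convention $\Psi(D)(u,v)=\lambda^u_v$ together with the defining relations $\sigma(v^{-1})u=a$, $u\tau(v^{-1})=b$ of $\textbf{Hom}(a,b)$, this is exactly $p_b-p_a$, which vanishes on loops. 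The intermediate identity $\Psi(\delta_p)(\varphi)=p_{t(\varphi)}-p_{s(\varphi)}$ that you single out is also the right thing to notice: it is precisely the ``potential'' form $\chi(\varphi)=P(t(\varphi))-P(s(\varphi))$ that the paper later uses to characterize quasi-inner $(\sigma,\tau)$-derivations in Theorem \ref{th-qinn-calc}, so your stronger statement connects this proposition directly to that result.
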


\begin{proof}
	If $p\tau(g) = \sigma(g)p$, then $\Psi(\delta_p)(pg, g) = \lambda^{p\tau(g)}_{g} - \lambda^{\sigma(g)p}_{g} = 1 - 1 = 0$. Otherwise, if $\varphi \in \textbf{Hom} (p, p)$ then $\varphi = (p\tau(g), p) = (\sigma(g)p, p)$.
\end{proof}

Not all locally finite characters which are trivial on loops are given by an inner derivation even for ordinary derivations (see \cite[page 76 (example)]{AMS16}).

\begin{definition}
A $(\sigma,\tau)$-derivation $D$ is said to be a {\it quasi-inner} if the corresponding character $\Psi(D)$ is trivial on loops.
\end{definition}

When mapping $\sigma$ and $\tau$ are identity mappings $(\sigma,\tau)$-derivations are equal to usual derivations on group algebra $\mathbb{C}[G]$. In this case quasi-inner derivations form an ideal which contains ordinary inner derivations
(\cite[Theorem 4.1]{Ar-Al}), and quasi-inner derivations can be easily calculated (see Theorem \ref{th-qinn-calc}).

\begin{definition} \label{centralelement}
An element $a \in G$ is said to be $(\sigma,\tau)$-central if $a\tau(v) = \sigma(v)a \, \, \forall v \in G$.
\end{definition}

\begin{proposition}
	For the given group $G$, maps $\sigma, \tau \in \textbf{End}(G)$, $(\sigma,\tau)$-central element $a$ and a homomorphism $\varphi : G \rightarrow \mathbb{C}$, a map $D(g) = \varphi(g)\sigma(g)a$ is a $(\sigma,\tau)$-derivation.
\end{proposition}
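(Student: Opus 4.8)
The plan is to verify the twisted Leibniz identity \eqref{sigmatauderLeibniz} directly on the group-element basis of $\mathbb{C}[G]$ and then invoke bilinearity to pass to all of $\mathbb{C}[G]$. First I would observe that the prescription $D(g) = \varphi(g)\sigma(g)a$ on generators extends uniquely to a $\mathbb{C}$-linear map $D \colon \mathbb{C}[G] \to \mathbb{C}[G]$; since $\sigma$ and $\tau$ are algebra endomorphisms (hence $\mathbb{C}$-linear) and $D$ is linear, both sides of \eqref{sigmatauderLeibniz} are $\mathbb{C}$-bilinear in their two arguments, so it suffices to check the identity on a pair of group elements $g_1, g_2 \in G \hookrightarrow \mathbb{C}[G]$.

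For such generators, expanding the left-hand side and using that $\varphi$ is a homomorphism into the additive group $(\mathbb{C},+)$ and that $\sigma$ is a group endomorphism gives
$$D(g_1 g_2) = \varphi(g_1 g_2)\,\sigma(g_1 g_2)\,a = \big(\varphi(g_1)+\varphi(g_2)\big)\,\sigma(g_1)\sigma(g_2)\,a.$$
For the right-hand side I would write
$$D(g_1)\tau(g_2) + \sigma(g_1)D(g_2) = \varphi(g_1)\,\sigma(g_1)\,a\,\tau(g_2) + \varphi(g_2)\,\sigma(g_1)\sigma(g_2)\,a.$$

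The crucial step is then to apply the defining property of a $(\sigma,\tau)$-central element (Definition \ref{centralelement}), namely $a\tau(v) = \sigma(v)a$ for every $v \in G$, with $v = g_2$, to rewrite the first summand on the right as $\varphi(g_1)\,\sigma(g_1)\sigma(g_2)\,a$. After this substitution both terms carry the common group element $\sigma(g_1)\sigma(g_2)a$, and collecting the scalar coefficients $\varphi(g_1)+\varphi(g_2)$ reproduces the left-hand side exactly. I do not anticipate a genuine obstacle here: the only points requiring care are the correct additive reading of the homomorphism $\varphi \colon G \to \mathbb{C}$ (so that $\varphi(g_1 g_2)=\varphi(g_1)+\varphi(g_2)$, consistent with the additive character property \eqref{characterproperty}) and the reduction to basis elements via bilinearity; once the centrality relation is inserted at exactly the place where $a\tau(g_2)$ appears, the computation closes immediately.
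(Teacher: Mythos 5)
Your proposal is correct and follows essentially the same route as the paper: check the twisted Leibniz rule on generators, use that $\varphi$ is an additive homomorphism and $\sigma$ a group endomorphism, and insert the centrality relation $a\tau(g_2)=\sigma(g_2)a$ at exactly the same point. The only cosmetic difference is that you expand both sides and meet in the middle (and make the reduction to basis elements by bilinearity explicit), whereas the paper rewrites the right-hand side into the left in one chain.
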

	
\begin{proof}
Using first the definition of $D$, then $(\sigma,\tau)$-centrality of the element $a$ and then homomorphism property of $\varphi$ and $\sigma$ and finally again the definition of $D$ one obtains:
		\begin{multline*}
			D(g_1)\tau(g_2) + \sigma(g_1)D(g_2) =
		 \varphi(g_1)\sigma(g_1)a\tau(g_2) + \sigma(g_1)\varphi(g_2)\sigma(g_2)a =  \\
\varphi(g_1)\sigma(g_1)\sigma(g_2)a + \varphi(g_2)\sigma(g_1)\sigma(g_2)a = \\
(\varphi(g_1) + \varphi(g_2))\sigma(g_1)\sigma(g_2)a =
			 \varphi(g_1g_2)\sigma(g_1g_2)a = D(g_1g_2).		
\end{multline*}
which means that $D$ is a $(\sigma,\tau)$-derivation.
\end{proof}

\begin{definition}
\label{def-sigma-tau-central-der}
	The $(\sigma,\tau)$-derivation $D$ is called a $(\sigma,\tau)$-central.
\end{definition}

 The $(\sigma,\tau)$-central derivations give us an example of noninner $(\sigma,\tau)-$derivations. For the case of general group algebra they were studied in \cite{Ar}.

\begin{proposition}
	The nonzero $(\sigma,\tau)$-central $(\sigma,\tau)$-derivation $D$ is not quasi-inner.
\end{proposition}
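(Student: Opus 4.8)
The plan is to compute the character $\Psi(D)$ explicitly and then exhibit a loop on which it does not vanish. By Definition \ref{def-sigma-tau-central-der} (together with the preceding Proposition), a $(\sigma,\tau)$-central derivation has the form $D(g) = \varphi(g)\sigma(g)a$ for a $(\sigma,\tau)$-central element $a \in G$ and a homomorphism $\varphi : G \to \mathbb{C}$. First I would read off the coefficients: since $D(g)$ is supported on the single group element $\sigma(g)a$ with coefficient $\varphi(g)$, the associated character is
$$
\Psi(D)(u, v) = \begin{cases} \varphi(v), & u = \sigma(v)a, \\ 0, & \text{otherwise}. \end{cases}
$$

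Next I would verify that every morphism in the support of $\Psi(D)$ is a loop. For a morphism of the form $(\sigma(v)a, v)$ the source is $\sigma(v^{-1})\sigma(v)a = \sigma(e)a = a$, using that $\sigma \in \textbf{End}(G)$ and hence $\sigma(e) = e$. The target is $\sigma(v)a\tau(v^{-1})$; applying the $(\sigma,\tau)$-centrality of $a$ (Definition \ref{centralelement}) with $v^{-1}$ in place of $v$, namely $a\tau(v^{-1}) = \sigma(v^{-1})a$, the target becomes $\sigma(v)\sigma(v^{-1})a = \sigma(e)a = a$. Hence $(\sigma(v)a, v) \in \textbf{Hom}(a, a)$ is a loop based at $a$ for every $v \in G$, and in fact $\Psi(D)$ is supported entirely on loops at $a$.

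Finally, since $D \neq 0$ the homomorphism $\varphi$ is not identically zero, so there is some $v_0 \in G$ with $\varphi(v_0) \neq 0$. The corresponding loop $(\sigma(v_0)a, v_0) \in \textbf{Hom}(a, a)$ then satisfies $\Psi(D)(\sigma(v_0)a, v_0) = \varphi(v_0) \neq 0$. Thus $\Psi(D)$ is not trivial on loops, and by the definition of quasi-inner this shows that $D$ is not quasi-inner.

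The only genuine obstacle is the target computation: one must invoke the $(\sigma,\tau)$-centrality of $a$ in precisely the right form to collapse $\sigma(v)a\tau(v^{-1})$ back to $a$, thereby confirming that the support of $\Psi(D)$ consists of loops. The source computation and the nonvanishing of $\varphi$ are then immediate.
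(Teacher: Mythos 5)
Your proof is correct and follows essentially the same route as the paper: both identify the character value $\Psi(D)(\sigma(g)a,g)=\varphi(g)$ and observe that $(\sigma(g)a,g)$ is a loop, so a nonzero $\varphi$ gives a loop with nonvanishing character. You simply supply the details the paper leaves implicit, namely the explicit source and target computation using $(\sigma,\tau)$-centrality of $a$ and the remark that $D\neq 0$ forces $\varphi(v_0)\neq 0$ for some $v_0$.
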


\begin{proof}
	The character's value $\Psi(D)(\sigma(g)a, g) = \varphi(g)$, and since $(\sigma(g)a, g)$ is a loop, the $(\sigma,\tau)$-derivation $D$ is not quasi-inner.
\end{proof}

Quasi-inner $(\sigma,\tau)$-derivations can be calculated in the following way.
Consider a map $(u, v) \in \textbf{Hom}(\Gamma)$. Let $t, s : \textbf{Hom}(\Gamma) \rightarrow Obj(\Gamma)$ be a target and source maps, such that $\phi = (u, v) : s(\phi) \rightarrow t(\phi)$. If the character $\chi$ is trivial on loops then exists function $P_{\chi}:\operatorname{Obj}(\Gamma)\to \mathbb{C}$ such that
$$
	\chi (\phi) = P_{\chi}(t(\phi)) - P_{\chi}(s(\phi)).
$$

That means that if $\chi$ is locally finite character, then the following formula is valid for a quasi-inner $(\sigma,\tau)$-derivation $D_P$:
$$
D_P(g) = \sum\limits_{h \in G} (P_{\chi}(t(h, g)) - P_{\chi}(s(h, g)))h = \sum\limits_{h \in G} (P_{\chi}(h\tau(g^{-1})) - P_{\chi}(\sigma(g^{-1})h))h.
$$
	
In other words we get the following statement.

\begin{theorem}
\label{th-qinn-calc}
	If $D\in QInn(\Gamma)$, then exists a finitely supported function $P:Obj(\Gamma)\to \mathbb{C}$ such that for generators $g\in \mathbb{C}[G]$,
	$$
		D(g) = \sum\limits_{h \in G} (P(h\tau(g^{-1})) - P(\sigma(g^{-1})h))h.
	$$
\end{theorem}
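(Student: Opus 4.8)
\section*{Proof proposal}

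The plan is to transport the statement to the groupoid $\Gamma$ via the isomorphism $\Psi$ of Theorem~\ref{th-char-der} and there integrate the cocycle. Set $\chi=\Psi(D)$. Since $D\in QInn(\Gamma)$, the character $\chi$ is locally finite and trivial on loops, so it suffices to produce a (finitely supported) potential $P:Obj(\Gamma)\to\mathbb{C}$ with $\chi(\phi)=P(t(\phi))-P(s(\phi))$ for every $\phi\in\textbf{Hom}(\Gamma)$; applying $\Psi^{-1}$ and reading off $t(h,g)=h\tau(g^{-1})$ and $s(h,g)=\sigma(g^{-1})h$ then yields the stated formula, because $\Psi^{-1}(\chi)(g)=\sum_{h\in G}\chi(h,g)\,h$.

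First I would build the potential componentwise, using the decomposition \eqref{eq-Gamma-decomp} of $\Gamma$ into the connected subgroupoids $\Gamma_{[u]_{\sigma,\tau}}$ indexed by $(\sigma,\tau)$-conjugacy classes. In each component I fix a base object $a_0$, set $P(a_0)=0$, and for an arbitrary object $b$ in the same class choose a morphism $\gamma\colon a_0\to b$ (one exists by connectedness) and put $P(b)=\chi(\gamma)$. Well-definedness is exactly where triviality on loops enters: if $\gamma,\gamma'\colon a_0\to b$ are two such morphisms, then $\gamma\circ(\gamma')^{-1}$ is a loop at $a_0$, so additivity \eqref{characterproperty} together with the vanishing of $\chi$ on loops gives $\chi(\gamma)=\chi(\gamma')$. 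The same two ingredients verify the defining identity: for $\phi\colon a\to b$ one composes a chosen morphism $\beta\colon a_0\to a$ with $\phi$ to reach $b$ and reads off $\chi(\phi)=P(b)-P(a)=P(t(\phi))-P(s(\phi))$.

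Transporting back through $\Psi^{-1}$, I substitute $\chi(h,g)=P(h\tau(g^{-1}))-P(\sigma(g^{-1})h)$ into $D(g)=\sum_{h}\chi(h,g)\,h$. Local finiteness of $\chi$ guarantees that for each fixed $g$ only finitely many terms survive, so the right-hand side is a genuine element of $\mathbb{C}[G]$ and the displayed formula holds verbatim.

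The main obstacle is the global finiteness of the support of $P$: the construction above pins $P$ down on each component only up to an additive constant, and a priori infinitely many objects, possibly across infinitely many components, could carry nonzero values. The delicate step is therefore to exploit the local finiteness of $\chi$ to normalise these constants so that $P$ vanishes off a finite set; concretely I would track, along the morphisms $(\sigma(v)a_0,v)\colon a_0\to \sigma(v)a_0\tau(v^{-1})$ that sweep out a component, which objects are forced to carry nonzero potential, and argue that only finitely many can occur without violating the per-fibre finiteness of $\chi$. This is the crux of the argument and the point at which the hypotheses must be used in full; everything else is the standard \emph{cocycle-is-a-coboundary} bookkeeping on a connected groupoid.
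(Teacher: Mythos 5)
Your construction of the potential is exactly the paper's argument, only with the details spelled out: the paper simply asserts that a character trivial on loops admits $P_{\chi}$ with $\chi(\phi)=P_{\chi}(t(\phi))-P_{\chi}(s(\phi))$ and then substitutes into $\Psi^{-1}(\chi)(g)=\sum_h\chi(h,g)h$, whereas you justify the existence of $P_{\chi}$ by the base-point construction on each connected component, with well-definedness coming from additivity plus vanishing on loops. Up to that point your proof is fine and coincides with the paper's route.

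The step you single out as ``the crux'' --- normalising the constants so that $P$ has finite support --- is a genuine gap in your write-up, but you should know that the paper's proof does not address it either, and in fact it cannot be closed as stated. If $P$ were finitely supported, put $p=\sum_{h}P(h)\,h\in\mathbb{C}[G]$; then $p\tau(g)=\sum_{h}P(h\tau(g^{-1}))\,h$ and $\sigma(g)p=\sum_{h}P(\sigma(g^{-1})h)\,h$, so the displayed formula would say $D=\delta_p$, i.e.\ every quasi-inner $(\sigma,\tau)$-derivation would be inner. That contradicts the example cited in the paper itself (\cite[p.~76]{AMS16}) of a quasi-inner but non-inner ordinary derivation, and it would also render redundant the separate innerness argument given later in the proof of Theorem~\ref{th-FC}, which relies precisely on the finiteness of each class $[u]_{\sigma,\tau}$ to conclude that $P$ restricted to that component is finitely supported. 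What local finiteness of $\chi$ actually yields is only that for each fixed $g$ the sum $\sum_{h}\bigl(P(h\tau(g^{-1}))-P(\sigma(g^{-1})h)\bigr)h$ has finitely many nonzero terms, so that $D(g)\in\mathbb{C}[G]$; the function $P$ itself is determined only up to an additive constant on each subgroupoid (as the Remark following the theorem concedes) and in general cannot be chosen to vanish off a finite set. So your instinct about where the difficulty lies is correct, but the right resolution is not to push harder on the normalisation: the hypothesis ``finitely supported'' in the statement is an overstatement and should be read as the per-generator finiteness you already established.
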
	

\begin{remark}
The function $P$ from the theorem is not unique. It is determined up to the addition of a constant on each subgroupoid.
\end{remark}

\section{$(\sigma,\tau)-$nilpotent groups}
\label{sigma-tau-section}

\subsection{General case of $(\sigma,\tau)-$nilpotent groups}
The following concepts are similar to the terms from classic group theory. Derivations in classic rank $2$ nilpotent groups were studied in \cite{Ar}.

\begin{definition}
	A subgroup $Z_{\sigma,\tau} = \{z \in G \, | \, \sigma(z)p = p\tau(z) \, \forall p \in G\}$ is called a $(\sigma,\tau)$-center of the group $G$.
\end{definition}

Of course $z\in Z_{\sigma,\tau}$ is equivalent to the fact that there is single object in subgroupoid $\Gamma_{[z]}$.

It is worth mentioning that the  $(\sigma,\tau)$-center is \textit{not} a subgroup of $(\sigma,\tau)$-central elements, which were introduced in Definition \ref{centralelement}.

\begin{definition}
	A subgroup $Z_{\sigma,\tau}(u) = \{z \in G \, | \, \sigma(z)u = u\tau(z)\}$, $u \in G$ is called a $(\sigma,\tau)$-centraliser of the element $u$.
\end{definition}

\begin{proposition}
	A subgroup $Z_{\sigma,\tau} \subset G$ is a normal subgroup.
\end{proposition}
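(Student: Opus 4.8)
The plan is to show that $Z_{\sigma,\tau}$ is invariant under conjugation by arbitrary group elements. Let me analyze the definition carefully. We have
$$
Z_{\sigma,\tau} = \{z \in G \mid \sigma(z)p = p\tau(z) \ \forall p \in G\}.
$$
First I would verify this is a subgroup: it contains the identity $e$ (since $\sigma(e)=\tau(e)=e$ and $ep=pe$), it is closed under products (if $\sigma(z_1)p = p\tau(z_1)$ and $\sigma(z_2)p=p\tau(z_2)$ for all $p$, then using $\sigma$ and $\tau$ multiplicative one chains the two relations to get $\sigma(z_1z_2)p = p\tau(z_1z_2)$), and closed under inverses (from $\sigma(z)p=p\tau(z)$ rewrite as $p = \sigma(z)^{-1}p\tau(z)$ and substitute to obtain $\sigma(z^{-1})p = p\tau(z^{-1})$). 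The subgroup claim may already be implicit in calling it a ``subgroup'' in the definition, but I would at least note it.

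The main content is normality. The approach I would take is: fix $z \in Z_{\sigma,\tau}$ and an arbitrary $g \in G$, and show $g^{-1}zg \in Z_{\sigma,\tau}$, i.e. that $\sigma(g^{-1}zg)\,p = p\,\tau(g^{-1}zg)$ holds for every $p \in G$. Expanding via the homomorphism property of $\sigma$ and $\tau$, the goal becomes
$$
\sigma(g^{-1})\sigma(z)\sigma(g)\,p = p\,\tau(g^{-1})\tau(z)\tau(g).
$$
The natural idea is to apply the defining relation for $z$ to a cleverly chosen element $p'$ rather than to $p$ directly. The hard part will be finding the right substitution, because the twisting by two \emph{different} endomorphisms $\sigma$ and $\tau$ breaks the symmetry one relies on in the classical ($\sigma=\tau=\mathrm{id}$) case: there the relation $zp=pz$ for all $p$ immediately gives that $z$ is genuinely central and normality is trivial, but here $\sigma(z)p=p\tau(z)$ is a genuinely two-sided twisted condition.

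This is exactly the step I expect to be the main obstacle. I would try to push the element $\sigma(g)$ (and correspondingly $\tau(g)$) across using the defining relation: since the relation holds for \emph{all} $p$, I can choose $p = \sigma(g)\,q\,\tau(g)^{-1}$ for an arbitrary $q$, or attempt to ``absorb'' the outer factors $\sigma(g^{-1}),\tau(g^{-1})$ by an appropriate reparametrization of the universally-quantified variable. Concretely, I would substitute into $\sigma(z)p=p\tau(z)$ the element $p\mapsto \sigma(g)\,p'$, giving $\sigma(z)\sigma(g)p' = \sigma(g)p'\tau(z)$, and separately handle the interaction with $\tau(g)$; combining these with the relation groupoid interpretation (that $z$ corresponds to a single object in $\Gamma_{[z]}$) should close the computation. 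If the direct substitution does not immediately yield the claim, the fallback is to reinterpret normality groupoid-theoretically: $z\in Z_{\sigma,\tau}$ means $\Gamma_{[z]}$ has a single object, and I would argue that the $(\sigma,\tau)$-conjugacy structure forces the ordinary conjugate $g^{-1}zg$ to also have a single-object subgroupoid, which is precisely the normality assertion.
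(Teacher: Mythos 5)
Your proposal stops exactly where the proof has to happen. You correctly reduce the claim to showing $\sigma(g^{-1})\sigma(z)\sigma(g)\,p = p\,\tau(g^{-1})\tau(z)\tau(g)$ for all $p$, and you correctly identify reparametrizing the universally quantified variable as the tool; but you then declare this step ``the main obstacle,'' hedge with a vague groupoid-theoretic fallback, and never carry out the computation. Since that computation \emph{is} the entire content of the proposition, the proof is not actually given. The good news is that the substitution you name does work, in two lines: applying the defining relation $\sigma(z)q = q\tau(z)$ with $q = \sigma(g)p$ gives $\sigma(g^{-1})\sigma(z)\sigma(g)p = \sigma(g^{-1})\sigma(g)p\,\tau(z) = p\,\tau(z)$, and applying it with $q = p\,\tau(g^{-1})$ gives $p\,\tau(g^{-1})\tau(z)\tau(g) = \sigma(z)p\,\tau(g^{-1})\tau(g) = \sigma(z)p = p\,\tau(z)$. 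Both sides equal $p\,\tau(z)$, so $g^{-1}zg \in Z_{\sigma,\tau}$ and the subgroup is normal.

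There is also a shortcut you did not notice, which dissolves the ``asymmetry'' you worried about: putting $p=e$ in the defining relation forces $\sigma(z)=\tau(z)$, so $z\in Z_{\sigma,\tau}$ is equivalent to $\sigma(z)=\tau(z)\in Z(G)$, the ordinary centre. Then $\sigma(g^{-1}zg)=\sigma(g)^{-1}\sigma(z)\sigma(g)=\sigma(z)$ and likewise for $\tau$, and normality is immediate. Note that either of these completions uses only that $\sigma,\tau$ are group endomorphisms, whereas the paper's own computation passes through $\sigma^{-1}$ and $\tau^{-1}$ and thus implicitly assumes automorphisms; so the route you sketched is in fact the cleaner and more general one --- it just needed to be finished rather than deferred.
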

\begin{proof}
	Let us show that for $z \in Z_{\sigma,\tau}$ and $g \in G$ an element $gzg^{-1} \in Z_{\sigma,\tau}$ or in the other words $p\tau(gzg^{-1}) = \sigma(gzg^{-1})p \, \, \forall p \in G$. Indeed,
\begin{align*} 	
& p\tau(gzg^{-1}) = p\sigma(z) = \sigma(\sigma^{-1}(p)z) = \\
& \quad \sigma(\tau^{-1}(\tau(\sigma^{-1}(p))\tau(z))) = \sigma(\tau^{-1}(\sigma(z)\tau(\sigma^{-1}(p))))  = \sigma(\tau^{-1}(\sigma(z)))p,\\
& \sigma(gzg^{-1})p = \sigma(\tau^{-1}(\sigma(z)))\sigma(gg^{-1})p = \sigma(\tau^{-1}(\sigma(z)))p.
\end{align*}
	
\end{proof}

In accordance with the definition of $\Gamma$ the source of the map $(\sigma(g)p, g)$ is the object $p$, the target is $\sigma(g)p\tau(g^{-1})$ and the inverse map $(\sigma(g)p, g)^{-1}= (p\tau(g^{-1}), g^{-1})$. That means that $g^{-1}Z_{\sigma,\tau}(p)g = Z_{\sigma,\tau}(\sigma(g)p\tau(g^{-1}))$.

\begin{definition}
	A group $G$ such that $G / Z_{\sigma,\tau}$ is abelian is called a $(\sigma,\tau)$-nilpotent group with rank $2$.
\end{definition}

\begin{proposition}
	Consider the $(\sigma,\tau)$-nilpotent group $G$ with rank $2$. Then all elements in $Obj(\Gamma_{[u]_{\sigma,\tau}})$ have the same $(\sigma,\tau)$-centralizer group or, in the other words, $Z_{\sigma,\tau}(\sigma(g)p\tau(g^{-1})) = gZ_{\sigma,\tau}(p)g^{-1} = Z_{\sigma,\tau}(p)$.
\end{proposition}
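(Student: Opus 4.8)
The plan is to reduce the entire statement to a single fact: that the $(\sigma,\tau)$-centralizer $Z_{\sigma,\tau}(p)$ is a normal subgroup of $G$. Indeed, the first equality $Z_{\sigma,\tau}(\sigma(g)p\tau(g^{-1})) = gZ_{\sigma,\tau}(p)g^{-1}$ is just the conjugation relation already recorded from the source--target--inverse description of the morphisms of $\Gamma$ applied to the loop group based at $p$ (and once we show $Z_{\sigma,\tau}(p)$ is normal, the precise direction of the conjugation is immaterial, since $gZ_{\sigma,\tau}(p)g^{-1}=g^{-1}Z_{\sigma,\tau}(p)g=Z_{\sigma,\tau}(p)$). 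Thus the only genuinely new content is the second equality $gZ_{\sigma,\tau}(p)g^{-1}=Z_{\sigma,\tau}(p)$, i.e. invariance of $Z_{\sigma,\tau}(p)$ under conjugation by an arbitrary $g\in G$.

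First I would rephrase the rank-$2$ hypothesis. Since $Z_{\sigma,\tau}$ is normal and $G/Z_{\sigma,\tau}$ is abelian, the commutator subgroup satisfies $[G,G]\subseteq Z_{\sigma,\tau}$; in particular $c:=[g,z]=gzg^{-1}z^{-1}\in Z_{\sigma,\tau}$ for all $g,z\in G$, so that $gzg^{-1}=cz$ with $c\in Z_{\sigma,\tau}$. The whole point will be that this correction factor $c$ is harmless.

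The key step is the following lemma: for every $c\in Z_{\sigma,\tau}$ one has $\sigma(c)=\tau(c)$ and this element lies in the ordinary group center $Z(G)$. To see it, evaluate the defining identity $\sigma(c)q=q\tau(c)$ (valid for all $q\in G$) at $q=e$, which gives $\sigma(c)=\tau(c)$ because $\sigma,\tau$ are endomorphisms fixing $e$; substituting this back yields $\sigma(c)q=q\sigma(c)$ for every $q$, so $\sigma(c)\in Z(G)$. With this in hand, fix $z\in Z_{\sigma,\tau}(p)$ and $g\in G$, set $c=[g,z]$, and compute, using in turn that $\sigma$ is a homomorphism, the centralizer property $\sigma(z)p=p\tau(z)$, the centrality of $\sigma(c)$, and $\sigma(c)=\tau(c)$:
$$\sigma(gzg^{-1})p=\sigma(c)\sigma(z)p=\sigma(c)p\tau(z)=p\sigma(c)\tau(z)=p\tau(c)\tau(z)=p\tau(cz)=p\tau(gzg^{-1}).$$
Hence $gzg^{-1}\in Z_{\sigma,\tau}(p)$, giving $gZ_{\sigma,\tau}(p)g^{-1}\subseteq Z_{\sigma,\tau}(p)$.

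Applying this inclusion with $g$ replaced by $g^{-1}$ yields the reverse inclusion, so $Z_{\sigma,\tau}(p)$ is normal and $gZ_{\sigma,\tau}(p)g^{-1}=Z_{\sigma,\tau}(p)$. Combined with the conjugation relation from the first paragraph, all three subgroups coincide, which is the assertion. I expect the only delicate point to be the lemma: the observation that $\sigma$ and $\tau$ send $Z_{\sigma,\tau}$ into the honest center $Z(G)$ is exactly what neutralizes the commutator factor $c=[g,z]$ supplied by rank-$2$ nilpotency; everything else is routine bookkeeping.
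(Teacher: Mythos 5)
Your proof is correct and follows essentially the same route as the paper: both arguments use that $G/Z_{\sigma,\tau}$ being abelian forces $gzg^{-1}$ to differ from $z$ by a factor $c\in Z_{\sigma,\tau}$, and then verify directly that the resulting product still lies in $Z_{\sigma,\tau}(p)$. Your intermediate lemma ($\sigma(c)=\tau(c)\in Z(G)$ for $c\in Z_{\sigma,\tau}$) is correct but dispensable, since the defining relation $\sigma(c)p=p\tau(c)$ already lets you slide $\sigma(c)$ past $p$ in one step; the paper performs the same computation with the correction factor placed on the other side of $z$ and leaves the final membership check implicit.
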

\begin{proof}
	Consider an element $z_p \in Z_{\sigma,\tau}(p)$. Let $[z_p]$ be a class in quotient group $G/Z_{\sigma,\tau}$. Due to the fact, that $G/Z_{\sigma,\tau}$ is abelian, one obtain, that $[gz_pg^{-1}] = [z_p]$. Thus, there exists an element $z \in Z_{\sigma,\tau}$, such that $z_pz = gz_pg^{-1}$. That means, that $gz_pg^{-1} \in Z_{\sigma,\tau}(p)$.
\end{proof}

\begin{corollary}
\label{coroll-1}
	If there is a subgroupoid $\Gamma_{[u]_{\sigma,\tau}}$ with the infinite number of objects, then each character $\chi \in X(\Gamma_{[u]_{\sigma,\tau}})$ is trivial on loops.
\end{corollary}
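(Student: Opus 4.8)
The plan is to read off the loops of $\Gamma_{[u]_{\sigma,\tau}}$ explicitly, to translate ``trivial on loops'' into a vanishing statement for the homomorphisms that $\chi$ induces on the isotropy groups, and then to play the \emph{finiteness of the support} of a locally finite character against the \emph{infinitude of the object set}.

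First I would describe the loops. A morphism $(u,v)\in\textbf{Hom}(b,b)$ is a loop at $b$ precisely when $\sigma(v^{-1})u=b=u\tau(v^{-1})$, i.e. $u=\sigma(v)b$ and $\sigma(v)b=b\tau(v)$; thus the loops at $b$ are the pairs $\ell^{b}_{v}:=(\sigma(v)b,v)$ with $v\in Z_{\sigma,\tau}(b)$, and a short computation with \eqref{eq-composition-morph} gives $\ell^{b}_{v}\circ\ell^{b}_{w}=\ell^{b}_{wv}$, so that $v\mapsto\chi(\ell^{b}_{v})$ is a homomorphism $Z_{\sigma,\tau}(b)\to\mathbb{C}$. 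By the preceding Proposition all objects of $\Gamma_{[u]_{\sigma,\tau}}$ share a single centraliser $Z:=Z_{\sigma,\tau}(b)$, which is moreover normal in $G$; hence for a \emph{fixed} $v\in Z$ the pair $\ell^{b}_{v}=(\sigma(v)b,v)$ is a loop at \emph{every} object $b\in Obj(\Gamma_{[u]_{\sigma,\tau}})$, all these loops sharing the second coordinate $v$ and having pairwise distinct first coordinates $\sigma(v)b$. Since $Obj(\Gamma_{[u]_{\sigma,\tau}})$ is infinite and $\chi$ is locally finite, for this fixed $v$ the value $\chi(\ell^{b}_{v})$ must vanish for all but finitely many objects $b$.

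Next I would propagate this vanishing to every loop. Writing $b=\sigma(g)a\tau(g^{-1})=:g\cdot a$ and conjugating $\ell^{a}_{v}$ by the connecting morphism $\gamma=(\sigma(g)a,g):a\to b$, a direct computation yields $\gamma^{-1}\circ\ell^{a}_{v}\circ\gamma=\ell^{b}_{gvg^{-1}}$, and since $\chi(\gamma^{-1})=-\chi(\gamma)$ this gives the transport rule $\chi(\ell^{b}_{gvg^{-1}})=\chi(\ell^{a}_{v})$. In particular, if $g\in C_G(v)$ then $gvg^{-1}=v$ and $\chi(\ell^{b}_{v})=\chi(\ell^{a}_{v})$, so $\chi(\ell^{\bullet}_{v})$ is constant along the $C_G(v)$-orbit of $a$ inside $Obj(\Gamma_{[u]_{\sigma,\tau}})$. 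Whenever that orbit is infinite, constancy together with the finiteness from the previous paragraph forces $\chi(\ell^{a}_{v})=0$; this already settles every $v\in Z$ lying in the ordinary centre of $G$, for which $C_G(v)=G$ acts with the full infinite orbit $[u]_{\sigma,\tau}$.

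The main obstacle is the residual case in which the $C_G(v)$-orbit of $a$ is finite, for there the transport rule only equates $\chi(\ell^{b}_{v})$ with $\chi(\ell^{a}_{g^{-1}vg})$, i.e. it simultaneously twists the label by $v\mapsto g^{-1}vg$, so one cannot keep both the second coordinate and the character value fixed with a single morphism. To close this gap I would use that $G/Z_{\sigma,\tau}$ is abelian, so that $g^{-1}vg=v\,[v,g]$ with $[v,g]\in[G,G]\subseteq Z_{\sigma,\tau}\subseteq Z$; since $v\mapsto\chi(\ell^{a}_{v})$ is a homomorphism this rewrites the transport rule as $\chi(\ell^{b}_{v})=\chi(\ell^{a}_{v})+\chi(\ell^{a}_{[v,g]})$. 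Thus $\chi(\ell^{\bullet}_{v})$ becomes constant in $b$ as soon as $\chi$ annihilates the commutator loops $\ell^{a}_{[v,g]}$, and the finite-support-versus-infinite-objects inequality then finishes as before. The crux is therefore to prove $\chi(\ell^{a}_{c})=0$ for $c\in[G,G]$: such $c$ lie in the $(\sigma,\tau)$-centre $Z_{\sigma,\tau}$ and so give loops at every object with the same second coordinate $c$, which brings them back under the same infinite-object mechanism, and one propagates downward along the commutator filtration. I expect this verification --- that the conjugation twist is invisible to $\chi$ --- to be the one genuinely delicate point; the remainder is the bookkeeping of the loops $\ell^{b}_{v}$, the transport rule, and the single counting inequality against the infinite object set.
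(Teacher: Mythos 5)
Your overall strategy is the same as the paper's: use the preceding proposition to get a single common centraliser $Z$ for all objects of $\Gamma_{[u]_{\sigma,\tau}}$, observe that a fixed $v\in Z$ therefore produces a loop $(b\tau(v),v)$ at every one of the infinitely many objects $b$, note that these loops have pairwise distinct first coordinates but a common second coordinate, and play this against local finiteness. The paper's own proof is exactly this skeleton, except that it simply \emph{asserts} the equality $\chi(a\tau(g),g)=\chi(b\tau(g),g)$ and stops; you are right that this constancy is the real content of the argument, and your description of the loops, of the homomorphism $v\mapsto\chi(\ell^{b}_{v})$ on the isotropy group, and the transport rule $\chi(\ell^{b}_{gvg^{-1}})=\chi(\ell^{a}_{v})$ are all correct and make explicit what the paper leaves implicit.

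The gap sits exactly at the point you flag as delicate, and your sketch does not close it. Having reduced to showing $\chi(\ell^{a}_{c})=0$ for $c\in[G,G]$, you propose to ``propagate downward along the commutator filtration''. But rank-$2$ $(\sigma,\tau)$-nilpotency only gives $[G,G]\subseteq Z_{\sigma,\tau}$; it gives neither $[[G,G],G]=\{e\}$ nor ordinary nilpotency of $G$, so the recursion $\chi(\ell^{b}_{c})=\chi(\ell^{a}_{c})+\chi(\ell^{a}_{[c,g]})$ keeps producing labels in $[[G,G],G]$, $[[[G,G],G],G]$, and so on, a chain that need not terminate. Membership of $c$ in $Z_{\sigma,\tau}$ guarantees that the loop $\ell^{b}_{c}$ exists at every object, but not that its $\chi$-value is independent of $b$ --- which is again the very statement being proved. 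What your counting argument does establish unconditionally is the case of a label with \emph{finite} ordinary conjugacy class (finitely many possible transported labels, each with a finite exceptional set of objects, set against an infinite object set), which in particular covers $v\in Z\cap Z(G)$; the residual case of labels with infinite conjugacy class remains open in your proposal. It should be said that the paper offers no justification for the asserted constancy either, so your write-up isolates a genuine weak point of the published proof rather than introducing a new one.
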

\begin{proof}
	The proof is immediately  follows from the statement that for the given $(\sigma,\tau)$-derivation $D$ the corresponding character $\Psi(D)$ has to be locally-finite. Consider an object $a \in Obj(\Gamma_{[u]_{\sigma,\tau}})$. Then, since
$$
Z_{\sigma,\tau}(\sigma(g)a\tau(g^{-1})) = Z_{\sigma,\tau}(a),
$$
if there is a loop
$$
(a\tau(g), g) \in \textbf{Hom}(a, a),
$$
then every set $\textbf{Hom}(b, b) \, \, \forall b \in Obj(\Gamma_{[u]_{\sigma,\tau}})$ has a map
$$
(b\tau(g), g) \in \textbf{Hom}(b, b).
$$
Due to the fact that
$$
\chi(a\tau(g), g) = \chi(b\tau(g), g),
$$
one obtains that if $\chi(a\tau(g), g) \neq 0$, then a character $\chi$ does not satisfy the property of locally-finiteness.
\end{proof}

We are going now to generalize the  result of the corollary as a following theorem.

\begin{theorem}
\label{th-decomposition}
If $G$ is rank $2$ $(\sigma, \tau)$-nilpotent group then
\begin{equation}
\label{eq-decomposition-theorem}
	D_{(\sigma,\tau)} \,  \cong \bigoplus\limits_{\#[a]_{(\sigma,\tau)}<\infty} Z^*_{(\sigma,\tau)}(a) \, \bigoplus \, QInn(\Gamma),
\end{equation}
where $Z^*_{(\sigma,\tau)}$ is the space of group characters of the centralizer $Z_{(\sigma,\tau)}(a)$, i.e. $Z^*_{(\sigma,\tau)} = \textbf{Hom}(Z_{(\sigma,\tau)}, \mathbb{C})$, and $QInn(\Gamma)$ is a space of the all quasi-inner derivations on $\Gamma$.
\end{theorem}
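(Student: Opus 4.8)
The plan is to transport the statement entirely to the groupoid side via the isomorphism $\Psi$ of Theorem \ref{th-char-der}, so that $\mathcal{D}_{(\sigma,\tau)}(\mathbb{C}[G]) \cong X(\Gamma)$, and then to exploit the decomposition \eqref{eq-decomp-char-supports} of $X(\Gamma)$ over $(\sigma,\tau)$-conjugacy classes. I would split the index set $G^{(\sigma,\tau)}$ into the classes $[a]_{\sigma,\tau}$ with finitely many objects and those with infinitely many objects, treat the two families separately, and finally recombine the quasi-inner pieces. The rank-$2$ $(\sigma,\tau)$-nilpotency enters only through Corollary \ref{coroll-1}.

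For a class with $\#[a]_{\sigma,\tau}=\infty$, Corollary \ref{coroll-1} tells us that every $\chi \in X(\Gamma_{[a]_{\sigma,\tau}})$ is trivial on loops, so the whole summand already lies in $QInn(\Gamma)$ and there is nothing to split off. The substantive work is therefore the finite case, and here the first step is to identify the isotropy (loop) group $\textbf{Hom}(a,a)$ of the connected groupoid $\Gamma_{[a]_{\sigma,\tau}}$ with the centralizer $Z_{\sigma,\tau}(a)$. A loop at $a$ forces $u=\sigma(v)a$ and $\sigma(v)a=a\tau(v)$, i.e. it has the form $(\sigma(v)a,v)$ with $v\in Z_{\sigma,\tau}(a)$, and one checks using \eqref{eq-composition-morph} that $v\mapsto(\sigma(v)a,v)$ matches group multiplication up to inversion. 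Hence restriction of a character to loops yields a well-defined linear map
$$
\rho\colon X(\Gamma_{[a]_{\sigma,\tau}}) \longrightarrow Z^*_{\sigma,\tau}(a)=\textbf{Hom}(Z_{\sigma,\tau}(a),\mathbb{C}).
$$

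The kernel of $\rho$ is exactly the characters trivial on loops: in a connected groupoid, vanishing on the loops at one object forces vanishing on all loops, by conjugating an arbitrary loop by a connecting morphism, so $\ker\rho$ is precisely the space of quasi-inner characters supported in $\Gamma_{[a]_{\sigma,\tau}}$. Surjectivity of $\rho$ is where finiteness is essential: given $\eta\in Z^*_{\sigma,\tau}(a)$, I would choose connecting morphisms $\gamma_b\colon a\to b$ for the finitely many objects $b$, declare $\chi$ to vanish on the $\gamma_b$ and to equal $\eta$ on loops, and extend by the character property. Because for each fixed $v$ only finitely many $u$ yield a morphism whose source $\sigma(v^{-1})u$ lies in the finite set $Obj(\Gamma_{[a]_{\sigma,\tau}})$, this $\chi$ is automatically locally finite; thus $\rho$ is onto. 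Since everything in sight is a $\mathbb{C}$-vector space, the short exact sequence splits, giving $X(\Gamma_{[a]_{\sigma,\tau}})\cong Z^*_{\sigma,\tau}(a)\oplus QInn(\Gamma_{[a]_{\sigma,\tau}})$.

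Finally I would reassemble: summing the finite-class splittings together with the infinite-class identities across \eqref{eq-decomp-char-supports}, and using that a character is trivial on loops if and only if each of its components is (every loop lies in a single subgroupoid), so that $QInn(\Gamma)=\bigoplus_{[a]} QInn(\Gamma_{[a]_{\sigma,\tau}})$, one recovers precisely \eqref{eq-decomposition-theorem}. I expect the main obstacle to be the finite-class analysis—specifically establishing surjectivity of $\rho$ together with the automatic local finiteness of the extended character, and pinning down the isotropy-group identification $\textbf{Hom}(a,a)\cong Z_{\sigma,\tau}(a)$; the appeal to Corollary \ref{coroll-1} in the infinite case and the final recombination are then routine bookkeeping.
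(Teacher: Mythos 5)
Your proposal is correct and follows essentially the same route as the paper: decompose $X(\Gamma)$ over $(\sigma,\tau)$-conjugacy classes, invoke Corollary \ref{coroll-1} for the infinite classes, and for the finite classes identify the quotient of $X(\Gamma_{[a]_{\sigma,\tau}})$ by its loop-trivial characters with $Z^*_{(\sigma,\tau)}(a)$ via the isotropy group $\textbf{Hom}(a,a)\cong Z_{\sigma,\tau}(a)$. Your treatment of the finite-class case (the restriction map $\rho$, its kernel, surjectivity via connecting morphisms, and the automatic local finiteness) is in fact more detailed than the paper's rather terse argument, but it is the same underlying idea.
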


\begin{proof}
	As we noted above
	$$
		\Gamma = \underset{}{\bigsqcup} \Gamma_{[u]_{\sigma,\tau}}.
	$$
This implies the following decomposition
$$
	X(\Gamma) = \bigoplus X(\Gamma_{[u]_{\sigma,\tau}}),
$$
where $X(\Gamma_{[u]_{\sigma,\tau}})$ is the denotation of locally finite characters supported in a subgroupoid $\Gamma_{[u]_{\sigma,\tau}}$. If the class $[a]_{(\sigma,\tau)}$ is infinite, then by Corollary \ref{coroll-1}  characters from the subspace $X(\Gamma_{[u]_{\sigma,\tau}})$ are quasi-inner.

Now consider finite class $[u]_{(\sigma,\tau)}$. In $\Gamma_{[u]_{\sigma,\tau}}$ each set of maps $\{(a\sigma(g), g) \, | \, a \in Obj(\Gamma_{[u]_{\sigma,\tau}})\}$ is finite because the set of objects is finite. That means that each character on this subgroupoid is locally finite. Each equivalence class of the characters which have equals values on loops, i.e. $\chi_1-\chi_2\in QInn$, is defined by an element of the group $Z^*_{(\sigma,\tau)}(u)$.


Whence we get that
$$
	D_{(\sigma,\tau)}(\Gamma_{[u]_{\sigma,\tau}}) \cong Z^*_{(\sigma,\tau)}(u) \bigoplus QInn(\Gamma_{[u]_{\sigma,\tau}}).
$$

From the above it follows the statement of the theorem.
\end{proof}

\begin{remark}
Remind that if $u\in Z_{\sigma,\tau}$ then the space $QInn(\Gamma_{[u]_{\sigma,\tau}})$ is trivial.
\end{remark}

Group $(\sigma,\tau)$-nilpotency is necessary for triviality of characters on loops on infinite subgroupoids. If our group $G$ is not nilpotent then the right side of \eqref{eq-decomposition-theorem} is just a subspace in the space of all $(\sigma,\tau)$-derivations.

Following to the papers cited in introduction of this article we will describe conditions of quasi-innerness of $(\sigma,\tau)$-derivations.

\begin{corollary}
\label{coroll-criterion-triv}
	For groups satisfying the conditions of the Theorem \ref{th-decomposition}, all $(\sigma,\tau)$-derivations are quasi-inner if and only if the following condition is satisfied: all $(\sigma,\tau)-$centralizers are such that factor-group $Z_{(\sigma,\tau)}(a)/ Z'_{(\sigma,\tau)}(a)$ is a periodic group.
\end{corollary}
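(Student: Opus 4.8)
The plan is to read the criterion off the decomposition in Theorem~\ref{th-decomposition}, reducing it to a purely group-theoretic fact about homomorphisms into $(\mathbb{C},+)$. First I would note that by Theorem~\ref{th-decomposition},
$$
	D_{(\sigma,\tau)} \cong \bigoplus_{\#[a]_{(\sigma,\tau)}<\infty} Z^*_{(\sigma,\tau)}(a) \; \bigoplus \; QInn(\Gamma),
$$
so every $(\sigma,\tau)$-derivation is quasi-inner exactly when the complementary summand $\bigoplus_{\#[a]_{(\sigma,\tau)}<\infty} Z^*_{(\sigma,\tau)}(a)$ is trivial. As this is a direct sum, it vanishes if and only if each factor $Z^*_{(\sigma,\tau)}(a) = \textbf{Hom}(Z_{(\sigma,\tau)}(a),\mathbb{C})$ vanishes, where $a$ runs over the finite $(\sigma,\tau)$-classes; the infinite classes contribute no such factor, since by Corollary~\ref{coroll-1} their characters are already quasi-inner. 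Hence the corollary reduces to proving, for each relevant centraliser $H := Z_{(\sigma,\tau)}(a)$, the equivalence $\textbf{Hom}(H,\mathbb{C}) = 0 \Leftrightarrow H/H'$ is periodic, where $H' = Z'_{(\sigma,\tau)}(a)$ is the commutator subgroup.

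The key lemma I would isolate is thus the following general fact: for any group $H$, the space $\textbf{Hom}(H,(\mathbb{C},+))$ of additive characters is trivial if and only if the abelianisation $H/H'$ is periodic. The easy direction is almost immediate: any $\chi : H \to \mathbb{C}$ into the abelian group $(\mathbb{C},+)$ annihilates $H' = [H,H]$ and so factors through $H/H'$, and if $\bar x \in H/H'$ has finite order $n$ then $n\,\chi(\bar x) = \chi(\bar x^{\,n}) = 0$ forces $\chi(\bar x) = 0$ because $(\mathbb{C},+)$ is torsion-free. Consequently, when $H/H'$ is periodic every such $\chi$ vanishes identically.

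For the converse I would argue contrapositively, and this is the one step I expect to require genuine care. If $H/H'$ is not periodic it contains an element $\bar x$ of infinite order, so the cyclic subgroup $\langle \bar x \rangle \cong \mathbb{Z}$ carries the nonzero homomorphism $\bar x^{\,k} \mapsto k$ into $\mathbb{C}$. Since $(\mathbb{C},+)$ is a divisible, hence injective, $\mathbb{Z}$-module, this homomorphism extends to $H/H' \to \mathbb{C}$ and then pulls back along $H \twoheadrightarrow H/H'$ to a nonzero element of $\textbf{Hom}(H,\mathbb{C})$, proving $\textbf{Hom}(H,\mathbb{C}) \neq 0$. Combining the lemma with the reduction above gives precisely the stated criterion. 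The substantive point is therefore not the derivation theory, which is entirely absorbed into Theorem~\ref{th-decomposition}, but this injectivity argument extending a character from a single infinite-order generator to the whole abelianisation.
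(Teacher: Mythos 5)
Your proposal is correct and follows essentially the same route as the paper: read the criterion off the decomposition of Theorem~\ref{th-decomposition} and reduce it to the equivalence between periodicity of $Z_{(\sigma,\tau)}(a)/Z'_{(\sigma,\tau)}(a)$ and triviality of $\textbf{Hom}(Z_{(\sigma,\tau)}(a),\mathbb{C})$. The only difference is that the paper simply asserts this group-theoretic equivalence, whereas you prove it properly (torsion-freeness of $(\mathbb{C},+)$ for one direction, divisibility/injectivity for the other), which is a welcome addition rather than a deviation.
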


Here $Z'_{(\sigma,\tau)}(a)$ is a commutant of group $Z_{(\sigma,\tau)}(a)$. Recall that a periodic group is a group such that all elements have finite order.

\begin{proof}
	The group $Z_{(\sigma,\tau)}(a)/Z'_{(\sigma,\tau)}(a)$ is naturally abelian. The periodicity of a group is equivalent to triviality of the space of group characters $Z_{(\sigma,\tau)}(a)/Z'_{(\sigma,\tau)}(a)$. Triviality for each $a\in G$ of spaces $Z^*_{(\sigma,\tau)}(a)$ -- is equal to triviality of first term in \eqref{eq-FC-decomposition-theorem}.
	So from the Theorem \ref{th-decomposition} we get that all $(\sigma,\tau)$-derivations are quasi-inner.
\end{proof}

\subsection{The case of inner endomorphisms}
\label{subsec:inneendoms}
	
	Let $G$ be a discrete rank $2$ nilpotent group and $\sigma, \tau \in \textbf{Aut}(G)$ act for fixed elements  $\tilde{\sigma},\tilde{\tau} \in G$ as follows
	\begin{center}
		$\sigma(u) = \tilde{\sigma} u \tilde{\sigma} ^{-1}$, $\tau(u) = \tilde{\tau} u \tilde{\tau}^{-1}$.
	\end{center}
	Let $Z(G)$ be the usual center of $G$.
	\begin{remark}
		We remind, that the group $G$ is said to be  rank $2$ nilpotent group, if and only if the quotient group $G/Z(G)$ is abelian. In our notation nilpotent rank $2$ group is a rank $2$ $(id,id)$-nilpotent group.
	\end{remark}
	\begin{proposition}
	\label{prop-center-sigma-tau}
		The usual center of $G$ is equal to $Z_{\sigma,\tau}$.
	\end{proposition}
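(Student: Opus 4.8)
The plan is to establish the set equality $Z_{\sigma,\tau} = Z(G)$ by proving the two inclusions separately. The only structural facts I will need are that $\sigma$ and $\tau$ are conjugations by the \emph{group} elements $\tilde{\sigma},\tilde{\tau}\in G$, and that the ordinary center $Z(G)$ is a characteristic subgroup, hence invariant under every automorphism of $G$, in particular under $\sigma$. Worth noting is that the rank $2$ nilpotency hypothesis of the ambient subsection plays no role here; the statement holds for any group on which $\sigma,\tau$ act as inner automorphisms.

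First I would treat the inclusion $Z(G)\subseteq Z_{\sigma,\tau}$. Given $z\in Z(G)$, since $z$ commutes with every element of $G$, in particular with $\tilde{\sigma}$ and $\tilde{\tau}$, one computes $\sigma(z)=\tilde{\sigma}z\tilde{\sigma}^{-1}=z$ and $\tau(z)=\tilde{\tau}z\tilde{\tau}^{-1}=z$. The defining relation of $Z_{\sigma,\tau}$, namely $\sigma(z)p=p\tau(z)$ for all $p\in G$, then collapses to $zp=pz$, which is exactly the centrality of $z$. Hence $z\in Z_{\sigma,\tau}$.

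For the reverse inclusion $Z_{\sigma,\tau}\subseteq Z(G)$, the key trick is to exploit the defining relation at a single well-chosen argument. Let $z\in Z_{\sigma,\tau}$, so that $\sigma(z)p=p\tau(z)$ holds for every $p\in G$. Evaluating at $p=e$ yields $\sigma(z)=\tau(z)$; denote this common value by $w$. Substituting back, the relation becomes $wp=pw$ for all $p\in G$, that is $w\in Z(G)$. Since $w=\sigma(z)$ with $\sigma\in\textbf{Aut}(G)$ and $Z(G)$ is characteristic, applying $\sigma^{-1}$ gives $z=\sigma^{-1}(w)\in Z(G)$, which completes the inclusion and the proof.

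The argument is short, so there is no serious obstacle; the one point requiring a little care is the reverse inclusion, where the two-sided twisted condition must first be reduced to an honest commutativity statement. The decisive observation is that setting $p=e$ forces $\sigma(z)=\tau(z)$, after which the surviving condition is ordinary centrality of $w=\sigma(z)$, and the characteristicity of $Z(G)$ is precisely what lets one transport that conclusion back along $\sigma$ to $z$ itself.
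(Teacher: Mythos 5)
Your proof is correct. The forward inclusion $Z(G)\subseteq Z_{\sigma,\tau}$ is essentially the computation in the paper: both reduce $\sigma(z)p=p\tau(z)$ to ordinary centrality of $z$ using that conjugation by $\tilde{\sigma}$ and $\tilde{\tau}$ fixes central elements. The reverse inclusion, however, takes a genuinely different route. The paper rewrites $\sigma(z)a=a\tau(z)$ as $z=\bigl(\tilde{\sigma}^{-1}a\tilde{\tau}\bigr)\,z\,\bigl(\tilde{\sigma}^{-1}a\tilde{\tau}\bigr)^{-1}$ and observes that $\tilde{\sigma}^{-1}G\tilde{\tau}=G$, so $z$ commutes with every element of $G$ directly. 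You instead specialize the defining relation at $p=e$ to force $\sigma(z)=\tau(z)=:w$, deduce $wp=pw$ for all $p$ so that $w\in Z(G)$, and transport this back to $z=\sigma^{-1}(w)$ via the fact that $Z(G)$ is characteristic. Both arguments are valid; yours isolates more cleanly which hypotheses are used where (the reverse inclusion needs only that $\sigma$ is an automorphism, while innerness of both $\sigma$ and $\tau$ is needed only to make the forward inclusion work), and your remark that rank $2$ nilpotency is irrelevant to this proposition is accurate. In this specific inner setting one could even shortcut the last step: since $w$ is central and $\sigma$ is conjugation by $\tilde{\sigma}$, one has $z=\sigma^{-1}(w)=\tilde{\sigma}^{-1}w\tilde{\sigma}=w$ outright. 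The paper's surjectivity trick, for its part, avoids invoking the characteristic property of the center but is stated rather tersely (the displayed implication and the phrase about $\tilde{\sigma}G\tilde{\tau}^{-1}=G$ are doing the work you make explicit).
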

	\begin{proof}
		Consider $z \in Z(G)$. Then
		$$
		\sigma(z)a = \tilde{\sigma} z \tilde{\sigma}^{-1}a = za = az = a \tilde{\tau} z \tilde{\tau}^{-1} = a\tau(z).
		$$
		Thus, $Z(G) \subset Z_{\sigma,\tau}$. Now consider $z \in Z_{\sigma,\tau}$. Then
		$$
			\sigma(z)a = a\tau(z) \rightarrow z  = \tilde{\sigma}^{-1} a \tilde{\tau} z \tilde{\tau}^{-1}a^{-1}\tilde{\sigma}
		$$
		Since $\tilde{\sigma} G \tilde{\tau}^{-1} = G$ as sets, the latter equation holds for every $g \in G$. Thus, $Z(G) = Z_{\sigma,\tau}$.
	\end{proof}
	
	\begin{corollary}
		The discrete rank $2$ nilpotent group $G$ coupled with $(\sigma, \tau)$ pair is a rank $2$ $(\sigma,\tau)$-nilpotent group.
	\end{corollary}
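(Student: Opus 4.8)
The plan is to reduce the claim directly to the definition of $(\sigma,\tau)$-nilpotency together with Proposition \ref{prop-center-sigma-tau}. Recall that a group $G$ is rank $2$ $(\sigma,\tau)$-nilpotent precisely when the quotient $G/Z_{\sigma,\tau}$ is abelian, whereas the standing hypothesis that $G$ is a classical rank $2$ nilpotent group asserts that $G/Z(G)$ is abelian. Thus the entire task amounts to identifying the two subgroups $Z(G)$ and $Z_{\sigma,\tau}$ occurring in these quotients.

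This identification is exactly the content of Proposition \ref{prop-center-sigma-tau}, which establishes the equality $Z(G) = Z_{\sigma,\tau}$ under the hypothesis that $\sigma$ and $\tau$ are the inner automorphisms $\sigma(u) = \tilde{\sigma} u \tilde{\sigma}^{-1}$ and $\tau(u) = \tilde{\tau} u \tilde{\tau}^{-1}$. First I would invoke this proposition to replace $Z_{\sigma,\tau}$ by $Z(G)$, so that the quotient groups literally coincide, $G/Z_{\sigma,\tau} = G/Z(G)$. It then remains only to observe that $G/Z(G)$ is abelian by hypothesis, whence $G/Z_{\sigma,\tau}$ is abelian as well, which is precisely the assertion that $G$ is a rank $2$ $(\sigma,\tau)$-nilpotent group.

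Since the one genuinely computational step—using that $\tilde{\sigma} G \tilde{\tau}^{-1} = G$ as sets in order to collapse the $(\sigma,\tau)$-centrality condition back to ordinary centrality—has already been carried out in Proposition \ref{prop-center-sigma-tau}, there is no serious obstacle remaining, and the corollary follows as a formal consequence. The only point that warrants a moment of care is confirming that the two notions of ``rank $2$'' match up once the centers are identified, that is, that the defining quotients are the \emph{same} group and not merely isomorphic ones; this is immediate from the set-level equality $Z(G) = Z_{\sigma,\tau}$ supplied by the proposition.
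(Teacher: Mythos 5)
Your argument is correct and is essentially identical to the paper's own proof: both invoke Proposition \ref{prop-center-sigma-tau} to identify $Z_{\sigma,\tau}$ with $Z(G)$ and then read off that $G/Z_{\sigma,\tau}=G/Z(G)$ is abelian, which is the definition of rank $2$ $(\sigma,\tau)$-nilpotency. No gaps.
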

	\begin{proof}
		As we mentioned before, a quotient group $G / Z(G)$ is abelian and $Z(G) = Z_{\sigma,\tau}$. Thus, the given group is a rank $2$ $(\sigma,\tau)$-nilpotent group.
	\end{proof}

\subsection{Heisenberg group}
\label{subsec-Heis}
Our results and observations allow us to calculate all $(\sigma,\tau)$-derivations in Heisenberg group. In the calculation we will use results in \cite{Ar}[Section 3.3]. Recall that the Heisenberg group $H$ is a group of unitriangular integer matrices. We denote the group algebra as $\mathcal{H}$.

Classes $[u]_{\sigma,\tau}$ in Heisenberg group either consist of one element or infinite. That means that by Theorem \ref{th-decomposition} $$D_{\sigma,\tau}(\mathcal{H})= ZDer_{\sigma,\tau}\bigoplus QInn,$$
where $ZDer_{\sigma,\tau}$ denotes $(\sigma,\tau)$-central $(\sigma,\tau)$-derivations from Definition \ref{def-sigma-tau-central-der}.

Description of $(\sigma,\tau)$-central derivations is quite simple.

The homomorphisms $\varphi_{\mu,\nu}$ to additive group of complex numbers look alike
\begin{equation}
   \varphi_{\mu,\nu}: \begin{pmatrix}
 		1 & a & c\\
 		0 & 1 & b\\
 		0 & 0 & 1
 	\end{pmatrix}
	 \mapsto
	 (\mu a+ \nu b).
\end{equation}

The center of group $H$ (both $(\sigma,\tau)$ and usual by Proposition \ref{prop-center-sigma-tau}) contains elements

$$
	z_r = \begin{pmatrix}
 		1 & 0 & r\\
 		0 & 1 & 0\\
 		0 & 0 & 1
 	\end{pmatrix}.
$$

Consider
 \begin{equation} \label{specialsigmatau}
\tilde{\sigma} = \begin{pmatrix}
 		1 & \sigma_a & \sigma_c\\
 		0 & 1 & \sigma_b\\
 		0 & 0 & 1
 	\end{pmatrix},
 	\tilde{\tau} = \begin{pmatrix}
 		1 & \sigma_a & \tau_c\\
 		0 & 1 & \sigma_b\\
 		0 & 0 & 1
 	\end{pmatrix}.
 \end{equation}

\begin{proposition}
	For each centralizer element $u = \sigma(g)u\tau(g^{-1}) \, \forall g \in G$ there exists $r \in \mathbb{R}$, such that $u = z_{r}$. In the other words, $(\sigma,\tau)$-centralzers and elements from $Z_{(\sigma, \tau)} = Z(G)$ become equal.
\end{proposition}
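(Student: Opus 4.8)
The plan is to exploit the very specific form of $\tilde\sigma$ and $\tilde\tau$ in \eqref{specialsigmatau} to reduce the twisted centrality condition to ordinary centrality in $H$. The key preliminary observation I would record is that $\tilde\sigma$ and $\tilde\tau$ share the same super-diagonal entries $\sigma_a,\sigma_b$ and differ only in the top-right corner, so their ratio $\tilde\tau\tilde\sigma^{-1}=z_{\tau_c-\sigma_c}$ lies in the center of $H$. Because conjugation by a central element is the identity automorphism, this forces $\sigma=\tau$ as automorphisms of $H$. This single structural fact is what makes the computation collapse, and I regard spotting it, rather than any subsequent calculation, as the crux of the argument.

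Granting $\sigma=\tau$, a $(\sigma,\tau)$-central element, i.e.\ one satisfying $u=\sigma(g)u\tau(g^{-1})$ for all $g$, is exactly an element with $\sigma(g)u=u\sigma(g)$ for all $g$. Since $\sigma$ is an inner, hence bijective, automorphism, the image $\{\sigma(g):g\in G\}$ is all of $G$, so the condition says precisely that $u$ commutes with every element of $H$, i.e.\ $u\in Z(H)$. Recalling that $Z(H)$ consists exactly of the elements $z_r$ described above then yields the claim, with $r$ the corner entry of $u$; the reverse inclusion is immediate, since any $z_r\in Z(H)$ certainly satisfies $\sigma(g)z_r\tau(g^{-1})=z_r$ once $\sigma=\tau$. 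The same reasoning identifies $Z_{\sigma,\tau}$ with $Z(H)$ as well, which is the ``become equal'' assertion, consistent with Proposition \ref{prop-center-sigma-tau}.

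For a reader who prefers an explicit verification in place of the central-ratio observation, I would instead compute the conjugation directly. Describing a general element of $H$ by its super-diagonal entries $a,b$ and its corner entry $c$, conjugation by $\tilde\sigma$ fixes $a$ and $b$ and sends $c\mapsto c+\sigma_a b-\sigma_b a$, with the $\sigma_c$-terms cancelling; the identical formula emerges for $\tau$ precisely because the $\tau_c$-terms cancel in the same way, which is once more the content of $\sigma=\tau$. Writing $u$ with super-diagonal entries $x,y$ and corner $z$, and equating the corner entries of $\sigma(g)u$ and $u\tau(g)$, then reduces the whole condition to the scalar identity $ay=xb$ required for all $a,b$; this forces $x=y=0$, so $u=z_z$ is central, while the $a$- and $b$-coordinates agree automatically. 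The only point demanding care here is the bookkeeping of the Heisenberg product together with the cancellation of the corner entries of $\tilde\sigma$ and $\tilde\tau$.
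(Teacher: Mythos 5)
Your proof is correct, and your primary route is genuinely cleaner than the paper's. The paper proves the proposition by brute force: it writes out $u$ and $g$ as unitriangular matrices and computes $\sigma(g)u\tau(g^{-1})$ directly, reading off $u_a=u_b=0$ from the corner entry (the displayed entry $u_c+g_au_b-u_bg_a$ in the paper is evidently a typo for $u_c+g_au_b-u_ag_b$, since as printed it is identically $u_c$ and would force nothing; your formula $c+\sigma_a b-\sigma_b a$ for conjugation is the correct one). Your first argument instead isolates the structural reason the computation collapses: since $\tilde\sigma$ and $\tilde\tau$ share the entries $\sigma_a,\sigma_b$, the element $\tilde\tau\tilde\sigma^{-1}=z_{\tau_c-\sigma_c}$ is central, hence $\sigma=\tau$ as automorphisms, the twisted centrality condition becomes ordinary centrality (using surjectivity of the inner automorphism $\sigma$), and $Z(H)=\{z_r\}$ finishes the proof with no matrix bookkeeping. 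This buys transparency and explains the paper's subsequent remark that these $(\sigma,\tau)$-derivations reduce to the ordinary derivations of \cite{Ar}; what it costs is generality --- it leans entirely on the special form \eqref{specialsigmatau}, whereas the direct computation (your fallback second argument, which coincides with the paper's proof modulo the typo) would still be the method of choice if $\tilde\sigma$ and $\tilde\tau$ had different super-diagonal entries. Both of your arguments are sound.
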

\begin{proof}
	$$
	u = \begin{pmatrix}
 		1 & u_a & u_c\\
 		0 & 1 & u_b\\
 		0 & 0 & 1
 	\end{pmatrix}, g = \begin{pmatrix}
 		1 & g_a & g_c\\
 		0 & 1 & g_b\\
 		0 & 0 & 1
 		\end{pmatrix},
	$$
	$$
	\sigma(g)u\tau(g^{-1}) = \begin{pmatrix}
 		1 & u_a & u_c + g_au_b - u_bg_a\\
 		0 & 1 & u_b\\
 		0 & 0 & 1
 	\end{pmatrix}
	$$
 	Thus, from the equation $u = \sigma(g)u\tau(g^{-1})$, one gets $u_a = 0$, $u_b = 0$, $u_c = r$.
\end{proof}

The latter equation means that $(\sigma,\tau)$-derivations given by \eqref{specialsigmatau} become equal to the usual derivations considered in \cite{Ar}.

So, we can calculate the $(\sigma,\tau)-$central derivation $d^{r}_{\varphi_{\mu,\nu}}$ on the generator of algebra $\mathcal{H}$:

\begin{equation}
	d^{r}_{\varphi_{\mu,\nu}} \begin{pmatrix}
 		1 & a & c\\
 		0 & 1 & b\\
 		0 & 0 & 1
 	\end{pmatrix} =
 	(\mu a+\nu b)\begin{pmatrix}
 		1 & a & c + \sigma_ab - \sigma_ba + r \\
 		0 & 1 & b\\
 		0 & 0 & 1
 	\end{pmatrix}.
\end{equation}

\section{$(\sigma,\tau)$-$FC$ groups}
\label{sec:sigmatauFCgroups}

The class of $FC$-groups is an interesting class of groups for which conditions are similar to condition of finite groups. More detailed study of $FC$-groups can be found in \cite{Robinson72,Gorch65,Gorch78}.

In classical group theory, $FC$-group is a group in which all conjugacy classes are finite. In this terms abelian group is a group where all conjugacy classes contain one element. These concepts are naturally carried to the case of $(\sigma,\tau)$-groups.

\begin{definition}\begin{itemize}
\item Group $G$ is a $(\sigma,\tau)$-$FC$ group if each $(\sigma,\tau)$-conjugacy class $[u]_{\sigma,\tau}\in G^{(\sigma,\tau)}$ is finite.
\item Group $G$ is a $(\sigma,\tau)$-abelian (or $(\sigma,\tau)-$commutative) if each class $[u]_{\sigma,\tau}$ contains single element.
\end{itemize}
\end{definition}
For $(\sigma,\tau)$-commutative group, the following identity holds
\begin{equation}
	\sigma(v)u = u\tau(v),\;\forall u,v\in G.
\end{equation}

Apparently the definition of $(\sigma,\tau)$-$FC$ group is introduced in this paper for the first time, and so we will show some properties of such groups.
Let us give an example of a source of such groups.

\begin{proposition}
Let endomorphisms $\sigma,\tau$ acting on a group $G$ have a finite image. Then $G$ is a $(\sigma,\tau)$-$FC$ group.
\end{proposition}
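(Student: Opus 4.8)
The plan is to show that finiteness of the images of $\sigma$ and $\tau$ forces each $(\sigma,\tau)$-conjugacy class to be finite. Recall that by Definition \ref{def-st-conj-class}, the class of an element $u$ is
\[
[u]_{\sigma,\tau} = \{\sigma(g^{-1})u\tau(g) \mid g \in G\}.
\]
So I would begin by observing that every element of $[u]_{\sigma,\tau}$ is completely determined by the pair $(\sigma(g^{-1}), \tau(g)) \in \sigma(G) \times \tau(G)$, since $u$ is fixed. The key point is that the map sending $g \in G$ to the resulting class element factors through the pair of values $(\sigma(g), \tau(g))$.

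Concretely, I would set up a surjection from the set of pairs $\{(\sigma(g), \tau(g)) : g \in G\} \subseteq \sigma(G) \times \tau(G)$ onto the class $[u]_{\sigma,\tau}$, by sending $(\sigma(g), \tau(g))$ to $\sigma(g^{-1})u\tau(g) = \sigma(g)^{-1}u\tau(g)$ (using that $\sigma$ is an endomorphism, so $\sigma(g^{-1}) = \sigma(g)^{-1}$, and likewise for $\tau$). This map is well defined precisely because the image element depends only on the values $\sigma(g)$ and $\tau(g)$, not on $g$ itself: if $\sigma(g) = \sigma(g')$ and $\tau(g) = \tau(g')$, then $\sigma(g^{-1})u\tau(g) = \sigma(g'^{-1})u\tau(g')$. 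Since the set of such pairs is a subset of $\sigma(G) \times \tau(G)$, which is finite by hypothesis (a product of two finite sets), the image $[u]_{\sigma,\tau}$ is finite as a surjective image of a finite set.

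The conclusion then follows immediately: every $(\sigma,\tau)$-conjugacy class is finite, which is exactly the definition of $G$ being a $(\sigma,\tau)$-$FC$ group. I would bound the cardinality explicitly by $\#[u]_{\sigma,\tau} \leq \#\sigma(G) \cdot \#\tau(G) < \infty$, valid uniformly for all $u \in G$.

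I do not expect any real obstacle here, since the argument is essentially a counting estimate once the correct factorization is identified. The only subtlety worth stating carefully is that the class element $\sigma(g^{-1})u\tau(g)$ genuinely depends only on the ordered pair $(\sigma(g),\tau(g))$ and not on further data about $g$; once this is made explicit the finiteness is automatic. If one wanted to be fully rigorous about indexing, the mildly delicate point is simply to confirm that $\sigma$ and $\tau$ being group endomorphisms gives $\sigma(g^{-1}) = \sigma(g)^{-1}$, so that the image values lie in the finite sets $\sigma(G)$ and $\tau(G)$ rather than in some larger set.
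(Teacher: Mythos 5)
Your proof is correct and follows the same route as the paper: the paper's own proof is a one-line observation that finiteness of $\sigma(G)$ and $\tau(G)$ makes each class $[u]_{\sigma,\tau}=\{\sigma(g^{-1})u\tau(g)\mid g\in G\}$ finite directly from Definition~\ref{def-st-conj-class}, and you have simply made the underlying counting explicit via the bound $\#[u]_{\sigma,\tau}\leq \#\sigma(G)\cdot\#\tau(G)$. No gaps; your version is just a more detailed writeup of the identical argument.
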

\begin{proof}
 If images of endomorphisms $\sigma,\tau$ are finite, then each $(\sigma,\tau)$-conjugacy class is finite by Definition
\ref{def-st-conj-class}.
\end{proof}

Standard $FC$-group may not be an $(\sigma,\tau)$-$FC$ group for arbitrary endomorphisms $(\sigma,\tau)$ even if $\sigma$ and $\tau$ are inner. Let for fixed elements $x,y\in G$, $\sigma_x: g\to xgx^{-1}, \tau_y:g\to yg y^{-1}$. Then for $a\in G$ the corresponding $(\sigma,\tau)$-conjugacy class looks like
\begin{equation}
\label{eq-st-cc}
	[a]_{\sigma_x,\tau_y} = \{xvx^{-1} a yv^{-1}y^{-1}|v\in G\}.
\end{equation}

So if $G$ is an infinite $FC$-group, then we have infinite number of conjugacy classes, so typically there is an infinite number of elements in $(\sigma,\tau)-$conjugacy class. However the following proposition holds.

\begin{proposition}
\label{prop-fc-sigmasigma}
	A group $G$ is  a $(\sigma_x,\sigma_x)$-$FC$ group if and only if $G$ is a $FC$-group.
\end{proposition}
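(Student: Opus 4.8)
The plan is to show that for $\sigma=\tau=\sigma_x$ the $(\sigma_x,\sigma_x)$-conjugacy classes coincide \emph{as sets} with the ordinary conjugacy classes of $G$; the equivalence of the two $FC$-conditions is then immediate, since a class is finite in one description exactly when it is finite in the other.

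First I would write out the class explicitly. Using Definition \ref{def-st-conj-class} with $\sigma=\tau=\sigma_x$ (or, equivalently, specialising \eqref{eq-st-cc} to $y=x$), one has
\[
[a]_{\sigma_x,\sigma_x}=\{\sigma_x(g^{-1})\,a\,\sigma_x(g)\mid g\in G\}=\{(xgx^{-1})^{-1}\,a\,(xgx^{-1})\mid g\in G\}.
\]
The key step is to reparametrise using that $\sigma_x$ is an automorphism of $G$, hence a bijection: as $g$ runs over $G$, the element $k:=xgx^{-1}$ also runs over all of $G$. Substituting gives
\[
[a]_{\sigma_x,\sigma_x}=\{k^{-1}\,a\,k\mid k\in G\},
\]
which is precisely the ordinary conjugacy class of $a$ in $G$. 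In particular $\#[a]_{\sigma_x,\sigma_x}$ equals the cardinality of the conjugacy class of $a$ for every $a\in G$.

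Finally I would read off the biconditional directly: $G$ is a $(\sigma_x,\sigma_x)$-$FC$ group if and only if every $(\sigma_x,\sigma_x)$-conjugacy class is finite, if and only if every ordinary conjugacy class is finite, if and only if $G$ is an $FC$-group. There is essentially no hard step here; the only point requiring care is the reparametrisation, where one must invoke the bijectivity of the inner automorphism $\sigma_x$ to guarantee that $k=xgx^{-1}$ sweeps out all of $G$ and that no collapsing of the class occurs. This is exactly the feature that fails for a general pair $(\sigma_x,\tau_y)$ with $x\neq y$, as \eqref{eq-st-cc} illustrates, which is why the statement is special to the diagonal case $\sigma=\tau$.
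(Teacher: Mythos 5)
Your proof is correct and follows essentially the same route as the paper: rewrite $\sigma_x(g^{-1})\,a\,\sigma_x(g)$ as conjugation of $a$ by $xgx^{-1}$ and identify the twisted class with the ordinary conjugacy class. You are in fact slightly more careful than the paper's own proof, which only observes containment of $[a]_{\sigma_x,\sigma_x}$ in the usual conjugacy class, whereas your explicit appeal to the bijectivity of $g\mapsto xgx^{-1}$ yields the equality of the two classes needed for both directions of the equivalence.
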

\begin{proof}
For the proof it is enough to see that for $x=y$ formula \eqref{eq-st-cc} can be rewritten in the following way
\begin{equation}
	[a]_{\sigma_x,\sigma_x} = \{xvx^{-1} a xv^{-1}x^{-1}|v\in G\},
\end{equation}
and on the right side we get
$$
xvx^{-1} a xv^{-1}x^{-1} = xvx^{-1} a (xvx^{-1})^{-1},
$$
so elements of $[a]_{\sigma_x,\sigma_x}$ are contained in the usual conjugacy class of the element $a$.
\end{proof}

The following statements follows easily from Proposition \ref{prop-fc-sigmasigma}.
\begin{corollary}
For each $x\in G$
\begin{itemize}
\item  For each group $G$ holds that $[a]_{\sigma_x,\sigma_x} = [a]$, where $[a]$ is the usual conjugacy class.
\item If $G$ is an abelian group then it is $(\sigma_x,\sigma_x)-$abelian.
\end{itemize}
\end{corollary}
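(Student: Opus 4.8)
The plan is to deduce both bullets directly from the computation already carried out in the proof of Proposition \ref{prop-fc-sigmasigma}. That proof rewrites \eqref{eq-st-cc} in the case $x=y$ and establishes the identity
$$
[a]_{\sigma_x,\sigma_x} = \{\, xvx^{-1}\, a\, (xvx^{-1})^{-1} \mid v \in G \,\},
$$
and I would take this presentation of the $(\sigma_x,\sigma_x)$-conjugacy class as the starting point for both parts.

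For the first bullet, the key observation is that the inner automorphism $\iota_x : v \mapsto xvx^{-1}$ is a bijection of $G$ onto itself. Proposition \ref{prop-fc-sigmasigma} uses only one direction of this, namely that every element of $[a]_{\sigma_x,\sigma_x}$ is an ordinary conjugate of $a$, which gives the inclusion $[a]_{\sigma_x,\sigma_x} \subseteq [a]$. First I would record the converse: since $\iota_x$ is surjective, the conjugating elements $w = xvx^{-1}$ range over all of $G$ as $v$ does, so every usual conjugate $waw^{-1}$ already occurs in the set above, giving $[a] \subseteq [a]_{\sigma_x,\sigma_x}$. Combining the two inclusions yields the equality $[a]_{\sigma_x,\sigma_x} = [a]$.

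For the second bullet, I would specialize to abelian $G$. There every usual conjugacy class is a singleton, $[a] = \{a\}$, since $waw^{-1} = a$ for all $w \in G$. By the first bullet $[a]_{\sigma_x,\sigma_x} = [a] = \{a\}$, so each $(\sigma_x,\sigma_x)$-conjugacy class contains exactly one element, which is precisely the defining condition for $G$ to be $(\sigma_x,\sigma_x)$-abelian.

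The argument is entirely routine, and there is essentially no obstacle beyond upgrading the inclusion in Proposition \ref{prop-fc-sigmasigma} to an equality. The only point demanding a line of care is making explicit that the reparametrization $w = xvx^{-1}$ sweeps out all of $G$, so that no conjugate of $a$ is missed; once that surjectivity is noted, both statements follow immediately.
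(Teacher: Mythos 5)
Your argument is correct and is essentially the one the paper intends: the corollary is stated without proof as an easy consequence of Proposition \ref{prop-fc-sigmasigma}, and you supply exactly the missing step, namely that the reparametrization $w = xvx^{-1}$ is surjective, which upgrades the inclusion $[a]_{\sigma_x,\sigma_x}\subseteq [a]$ from that proposition's proof to the claimed equality, after which the abelian case is immediate.
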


\begin{remark}
\label{remark-abelian-st}
	If group $G$ is $(\sigma,\tau)$-abelian it may not be abelian in the usual sense. An example of this is the case when $\sigma=\tau$ and an image of map $\sigma$ subsets in the usual centre of the group $G$.
\end{remark}

Now we will prove an analogue of Theorem \ref{th-decomposition} for $(\sigma,\tau)$-$FC$ groups.

\begin{theorem}
\label{th-FC}
	If $G$ is a finitely generated $(\sigma,\tau)$-$FC$ group, and $\sigma,\tau$ are endomorphisms of group $G$, then
	\begin{equation}
	\label{eq-FC-decomposition-theorem}
	D_{(\sigma,\tau)} \,  \cong \bigoplus\limits_{[a]_{(\sigma,\tau)}} Z^*_{(\sigma,\tau)}(a) \, \bigoplus \, Inn(\Gamma).
\end{equation}
\end{theorem}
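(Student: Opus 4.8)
The plan is to follow the architecture of the proof of Theorem~\ref{th-decomposition}, transporting everything through the isomorphism $\Psi:\mathcal{D}_{(\sigma,\tau)}(\mathbb{C}[G])\cong X(\Gamma)$ of Theorem~\ref{th-char-der} and the groupoid splitting \eqref{eq-Gamma-decomp}, and then to strengthen the quasi-inner summand to a genuinely inner one using finite generation. First I would invoke the decomposition \eqref{eq-decomp-char-supports}, $X(\Gamma)=\bigoplus_{[a]_{(\sigma,\tau)}} X(\Gamma_{[a]_{\sigma,\tau}})$. The $(\sigma,\tau)$-$FC$ hypothesis says every class $[a]_{(\sigma,\tau)}$ is finite, so each component $\Gamma_{[a]_{\sigma,\tau}}$ has a finite object set; hence local finiteness is automatic on every component and the per-component analysis of Theorem~\ref{th-decomposition} now applies to \emph{all} classes rather than only the finite ones.

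On a single component I would split a character into its restriction to loops and the remainder. A loop at $a$ is precisely a morphism $(\sigma(v)a,v)$ with $v\in Z_{\sigma,\tau}(a)$, and since the composition law \eqref{eq-composition-morph} multiplies second coordinates, $v\mapsto\chi(\sigma(v)a,v)$ is a homomorphism $Z_{\sigma,\tau}(a)\to(\mathbb{C},+)$, i.e.\ an element of $Z^*_{(\sigma,\tau)}(a)$; characters trivial on loops constitute $QInn(\Gamma_{[a]_{\sigma,\tau}})$. This yields $X(\Gamma_{[a]_{\sigma,\tau}})\cong Z^*_{(\sigma,\tau)}(a)\oplus QInn(\Gamma_{[a]_{\sigma,\tau}})$, and summing over all classes gives $D_{(\sigma,\tau)}\cong\bigoplus_{[a]_{(\sigma,\tau)}}Z^*_{(\sigma,\tau)}(a)\,\oplus\,QInn(\Gamma)$.

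The main step, and the point at which finite generation is indispensable, is to show $QInn(\Gamma)=Inn(\Gamma)$. By Theorem~\ref{th-qinn-calc} a quasi-inner derivation comes from a potential $P:Obj(\Gamma)\to\mathbb{C}$ with $\chi(\phi)=P(t(\phi))-P(s(\phi))$, and a direct check shows that if $P$ has finite support then $p=\sum_a P(a)\,a\in\mathbb{C}[G]$ realises it as the inner derivation $\delta_p$ (the coefficients $P(h\tau(g^{-1}))-P(\sigma(g^{-1})h))$ of $\delta_p(g)$ are exactly those of Theorem~\ref{th-qinn-calc}). So I must trim $P$ to finite support. Fix generators $g_1,\dots,g_k$ of $G$; each $D(g_i)\in\mathbb{C}[G]$, so $\chi(h,g_i)\neq 0$ for only finitely many $h$, whence only finitely many ``generating'' morphisms $(h,g_i)$ carry a nonzero value. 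Because the morphisms with second coordinate among $g_1,\dots,g_k$ generate $\Gamma$ (compositions multiply second coordinates and the $g_i$ generate $G$), and compositions never leave a connected component, a character vanishing on every generating morphism inside a component vanishes on that whole component. Thus $\chi$ is nonzero on only finitely many subgroupoids $\Gamma_{[a]_{\sigma,\tau}}$; on each the object set is finite by the $FC$ hypothesis, so $P$ is finitely supported there, and setting $P=0$ on every inactive component—using the per-component constant freedom noted after Theorem~\ref{th-qinn-calc}—makes $P$ globally finitely supported. Hence $QInn(\Gamma)=Inn(\Gamma)$, and substituting into the displayed decomposition yields \eqref{eq-FC-decomposition-theorem}.

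The hard part is exactly this reduction $QInn=Inn$: one must simultaneously bound the number of ``activated'' components (finite generation, without which a quasi-inner potential could be spread nonzero across infinitely many classes at once) and ensure each activated component is finite (the $FC$ hypothesis, guaranteeing the potential is finitely supported there). Both hypotheses of the theorem are consumed precisely at this point, and neither can be dropped.
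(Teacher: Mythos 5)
Your proposal is correct and follows essentially the same route as the paper: reduce to the decomposition of $X(\Gamma)$ over $(\sigma,\tau)$-classes as in Theorem~\ref{th-decomposition}, then upgrade $QInn$ to $Inn$ by using the potential $P$ of Theorem~\ref{th-qinn-calc}, finiteness of each class, and finite generation to force the character's support into finitely many finite subgroupoids. Your justification that a character vanishing on all morphisms $(\ast,g_i)$ within a component vanishes on the whole component actually spells out a step the paper's proof leaves implicit.
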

Here $Z^*_{(\sigma,\tau)}(a)$ is the space of group characters of the $(\sigma,\tau)$-centralizer $Z_{(\sigma,\tau)}(a)$ as in our Theorem \ref{th-decomposition}.
\begin{proof}
	The proof of \eqref{eq-FC-decomposition-theorem} in our theorem is similar to the proof of Theorem \ref{th-decomposition}. Except for one moment: we have to proof that all quasi-inner $(\sigma,\tau)-$derivations for the case of $(\sigma,\tau)-$$FC$ group are inner.
	
For quasi-inner $(\sigma,\tau)-$derivations, the Theorem \ref{th-qinn-calc} is applicable.
The set of objects in each subgroupoid $\Gamma_{[u]_{\sigma,\tau}}$ is finite. So the following formula holds:
\begin{equation}
\label{eq-d-1}
	d(g) = \sum\limits_{h \in G} (P(h\tau(g^{-1})) - P(\sigma(g^{-1})h))h.
\end{equation}

From formula \eqref{eq-decomp-char-supports} we have the following decomposition for derivation $d$:
\begin{equation}
\label{eq-d-decomp}
	d = \sum\limits_{[u]_{\sigma,\tau}\in G^{(\sigma,\tau)}} d_{[u]_{\sigma,\tau}},
\end{equation}
where derivation $d_{[u]_{\sigma,\tau}}$ is supported in groupoid $\Gamma_{[u]_{\sigma,\tau}}$. First we will prove that each term is inner and then check that sum \eqref{eq-d-decomp} is finite.

Consider the fixed term $d_{[u]_{\sigma,\tau}}$. The set of objects in $\Gamma_{[u]_{\sigma,\tau}}$ is finite so  the right side in formula \eqref{eq-d-1} is nonzero just for $h\in [u]_{\sigma,\tau}$. That implies that derivation $d_{[u]_{\sigma,\tau}}$ is inner and holds the formula
\begin{equation}
	d_{[u]_{\sigma,\tau}}(g) = [\sum\limits_{h\in {[u]_{\sigma,\tau}}} P(h)h, g].
\end{equation}

If $G$ is a finite or abelian group proof of innerness of derivation $d$ is trivial. So let $G$ be an infinite group.

By assumption $G=<g_1,\dots,g_n>$ is a finitely generated group. If the support of character $\chi$ contains infinite number of subgroupoids, then for some $i\in 1..n$ character is not trivial on infinite number of morphisms of the form $(\ast,g_i)$, which contradicts with locally finiteness condition which is necessary for $\chi$ to yield the derivation.
\end{proof}

\begin{corollary}
\label{prop-finite-case}
	If $G$ is a finite group then all $(\sigma,\tau)$-derivations are inner.
\end{corollary}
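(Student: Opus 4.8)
The plan is to obtain this as a direct specialization of Theorem \ref{th-FC}, reducing everything to showing that the character summand vanishes for a finite group. First I would verify that a finite group $G$ satisfies the hypotheses of Theorem \ref{th-FC}: every finite group is trivially finitely generated, and it is automatically a $(\sigma,\tau)$-$FC$ group, since each $(\sigma,\tau)$-conjugacy class $[a]_{\sigma,\tau} = \{\sigma(g^{-1})a\tau(g)\mid g\in G\}$ is a subset of the finite set $G$ and hence finite. Therefore the decomposition
$$
D_{(\sigma,\tau)} \cong \bigoplus_{[a]_{(\sigma,\tau)}} Z^*_{(\sigma,\tau)}(a) \,\bigoplus\, Inn(\Gamma)
$$
of Theorem \ref{th-FC} applies verbatim.

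The key step is then to show that each summand $Z^*_{(\sigma,\tau)}(a) = \textbf{Hom}(Z_{(\sigma,\tau)}(a), \mathbb{C})$ is trivial. Since $Z_{(\sigma,\tau)}(a)$ is a subgroup of the finite group $G$, it is itself finite, so every element $z$ has some finite order $n$. For any homomorphism $\varphi$ into the additive group of complex numbers one then has $n\,\varphi(z) = \varphi(z^n) = \varphi(e) = 0$, whence $\varphi(z) = 0$; as this holds for every $z$, the homomorphism $\varphi$ is identically zero. Thus $Z^*_{(\sigma,\tau)}(a) = 0$ for every class, and the first direct summand in the decomposition collapses entirely.

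Combining these two observations, the decomposition reduces to $D_{(\sigma,\tau)} \cong Inn(\Gamma)$, which is exactly the assertion that all $(\sigma,\tau)$-derivations of $\mathbb{C}[G]$ are inner. I expect no genuine obstacle here: the only content beyond invoking Theorem \ref{th-FC} is the elementary fact that a torsion group admits no nontrivial homomorphism into the torsion-free group $(\mathbb{C}, +)$. This is the same mechanism underlying Corollary \ref{coroll-criterion-triv}, where periodicity of $Z_{(\sigma,\tau)}(a)/Z'_{(\sigma,\tau)}(a)$ forces triviality of the group characters; a finite group is \emph{a fortiori} periodic, so the statement could alternatively be read off directly from that criterion once the innerness of quasi-inner derivations established in Theorem \ref{th-FC} is in hand.
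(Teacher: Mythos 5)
Your proof is correct and follows essentially the same route as the paper: both invoke Theorem \ref{th-FC} and then eliminate the character summand using the fact that a finite (hence torsion) group admits no nontrivial homomorphism into $(\mathbb{C},+)$. The paper phrases this last step in terms of loops in the finite groupoid $\Gamma$ (if $\chi(\zeta)\neq 0$ for a loop $\zeta$ then the powers $\zeta^n$ would be infinitely many distinct loops) rather than in terms of the centralizers $Z_{(\sigma,\tau)}(a)$, but the underlying mechanism is identical.
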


\begin{proof}
If $G$ is a finite group, then the set morphisms in our groupoid $\Gamma$ is finite because the set $Hom(\Gamma)$ is a Cartesian product $G\times G$. So the set of loops around each object is finite. But if $\zeta\in Hom(a,a)$ and $\chi(\zeta)\neq 0$ then $\chi(\zeta^n) \neq 0$, so the set of loops $\{\zeta^n|n\in \mathbf{N}\}$ is infinite which is impossible.

That gives us triviality of the character $\chi$ on all loops and it remains to apply the Theorem \ref{th-FC}.
\end{proof}

The general case when maps $\sigma,\tau$ are endomorphisms of group algebra was reviewed in
\cite{ChaudhuriComAlg2019}. In the cited paper, the following theorem (see Theorem 1.1) was proved.

\begin{theorem}[Chaudhuri, 2019]
 Let $G$ be a finite group and $R$ be an integral domain with $1$ with characteristic $p\geq0$ such that $p$ does not divide the order of $G$.
  \begin{enumerate}
  \item If $R$ is a field and $\sigma$, $\tau$ are algebra endomorphisms of $RG$ such that they fix $\mathcal{Z}(RG)$ elementwise, then every $(\sigma,\tau)$-derivation of $RG$ is $(\sigma,\tau)$-inner.
  \item If $R$ is an integral domain that is not a field and $\sigma,\;\tau$ are $R$-linear extensions of group homomorphisms of $G$ such that they fix $\mathcal{Z}(RG)$ elementwise, then every $(\sigma,\tau)$-derivation of $RG$ is $(\sigma,\tau)$-inner.
  \end{enumerate}
\end{theorem}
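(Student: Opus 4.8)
The plan is to read a $(\sigma,\tau)$-derivation cohomologically. Regard $RG$ as a bimodule over itself whose left action is twisted by $\sigma$ and whose right action is twisted by $\tau$, i.e. $a\cdot m\cdot b=\sigma(a)\,m\,\tau(b)$; then the twisted Leibniz rule \eqref{sigmatauderLeibniz} says precisely that a $(\sigma,\tau)$-derivation is a $1$-cocycle $D\colon RG\to {}_{\sigma}(RG)_{\tau}$, while the inner $(\sigma,\tau)$-derivations $\delta_p$ are exactly the $1$-coboundaries. Thus both assertions amount to the vanishing of $H^{1}$ with coefficients in this twisted bimodule, and the natural tool is a separability/averaging argument, available because the hypothesis $p\nmid|G|$ makes $|G|$ invertible (in $R$ when $R$ is a field, and in $\operatorname{Frac}(R)$ in general). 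First I would record the identity $D(e)=0$, which holds since $\sigma,\tau$ are unital endomorphisms, as already observed in the text.

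For the first statement $R$ is a field, so $|G|^{-1}\in R$ and I would produce the implementing element explicitly. Set
\[
p=\frac{1}{|G|}\sum_{g\in G}\sigma(g)\,D(g^{-1})\in RG,
\]
which is legitimate because each $\sigma(g)$ is a unit of $RG$ with inverse $\sigma(g^{-1})$ and $D$ takes values in $RG$. Then, for a group element $h$, I would compute $\sigma(h)p$, reindex the sum by $k=hg$, and expand $D(k^{-1}h)=D(k^{-1})\tau(h)+\sigma(k^{-1})D(h)$ by the Leibniz rule; the contributions $\sigma(k)\sigma(k^{-1})D(h)=D(h)$ sum over the $|G|$ indices to $|G|\,D(h)$, giving $\sigma(h)p=p\tau(h)+D(h)$. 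Hence $D(h)=\sigma(h)p-p\tau(h)=\delta_{-p}(h)$ on generators, and by linearity $D=\delta_{-p}$ is inner. This short computation is the backbone of the whole argument, and it uses only that $\sigma,\tau$ are unital and that $|G|$ is invertible.

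For the second statement $R$ is an integral domain that is not a field, so the same formula only produces $p=|G|^{-1}q$ with $q=\sum_{g}\sigma(g)D(g^{-1})\in RG$, and a priori $p$ lies in $KG$, $K=\operatorname{Frac}(R)$, rather than in $RG$. Applying the field case over $K$ still yields $D=\delta_{-p}$ over $KG$, so the problem collapses to a descent: clearing the denominator $|G|$. Writing $\partial(r)=\delta_{-r}$ for the coboundary map and $N=\{z\in RG : \sigma(x)z=z\tau(x)\ \forall x\}$ for its kernel (the space of $(\sigma,\tau)$-central elements), I have $\partial(q)=|G|\,D$; since $RG$ is torsion-free over $R$, it would suffice to exhibit a decomposition $q=|G|\,p'+z$ with $p'\in RG$ and $z\in N$, for then $|G|\,\partial(p')=\partial(q)=|G|\,D$ forces $D=\partial(p')=\delta_{-p'}$ over $RG$. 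In other words I must show the class of $q$ in $RG/N$ is divisible by $|G|$. This is exactly where the hypotheses of the second part enter: because $\sigma,\tau$ are $R$-linear extensions of group homomorphisms fixing $\mathcal{Z}(RG)$ elementwise, one can pin down $N$ via the ordinary central and conjugacy structure that $\sigma,\tau$ preserve, and carry out the correction integrally.

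The main obstacle, and the part I expect to demand the real work, is precisely this integral descent in the domain case: proving that the canonical $K$-implementation can be adjusted by a $(\sigma,\tau)$-central element so as to land in $RG$. The averaging identity itself is robust, requiring only unitality of $\sigma,\tau$ and invertibility of $|G|$; by contrast, the integrality of the implementing element is genuinely sensitive to the center-fixing hypothesis and to $\sigma,\tau$ arising from group homomorphisms. I would therefore concentrate the effort on an explicit description of $N$ and on establishing the divisibility statement $q\in |G|\,RG+N$, which is the crux separating the field case from the domain case.
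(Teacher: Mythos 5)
First, a point of order: the paper does not prove this statement at all. It is quoted verbatim as Theorem~1.1 of \cite{ChaudhuriComAlg2019} and serves only as a point of comparison with Corollary~\ref{prop-finite-case}, so there is no in-paper proof to measure your attempt against; it has to be judged on its own. On that basis, your part~(1) is correct and essentially complete: the element $p=\frac{1}{|G|}\sum_{g}\sigma(g)D(g^{-1})$ is the image of the separability idempotent of $RG$, the reindexing $k=hg$ together with $\sigma(k)\sigma(k^{-1})=\sigma(e)=1$ yields $\sigma(h)p=p\tau(h)+D(h)$, and $R$-linearity on the basis $G$ finishes the argument. Do note, though, that your computation never invokes the hypothesis that $\sigma,\tau$ fix $\mathcal{Z}(RG)$ elementwise; that is not in itself an error, but you should either flag that you are proving a formally stronger statement than the one quoted or identify where that hypothesis is actually needed.

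The genuine gap is in part~(2). Your reduction is sound: with $q=\sum_g\sigma(g)D(g^{-1})$, $\partial(r)=\delta_{-r}$ and $N=\ker\partial$, one has $\partial(q)=|G|\,D$, and since $RG$ is a free $R$-module and $|G|\cdot 1_R\neq 0$ in the domain $R$, any decomposition $q=|G|p'+z$ with $p'\in RG$, $z\in N$ would give $D=\partial(p')$. But the divisibility statement $q\in|G|\cdot RG+N$ \emph{is} the content of part~(2), and you do not prove it --- you only assert that the hypotheses (group homomorphisms fixing the center) should let one ``pin down $N$'' and ``carry out the correction integrally.'' Nothing in the proposal explains why the non-central part of $q$ is divisible by $|G|$; even for ordinary derivations this step requires a real argument, for instance grouping the coefficients of $q$ over $(\sigma,\tau)$-conjugacy classes and showing each class contributes a multiple of $|G|$ plus an element of $N$, or bypassing the division entirely by assembling the implementing element classwise from a potential function, in the spirit of Theorem~\ref{th-qinn-calc} and Corollary~\ref{prop-finite-case}. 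As written, part~(2) is a correct reduction of the theorem to its own crux, not a proof of it.
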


Note that if $\sigma$ and $\tau$ are identical isomorphisms, then we get a well-known theorem that in group algebras for finite groups all derivations are inner.

Another natural application of Theorem \ref{th-FC} is a case of $(\sigma,\tau)$-abelian group. It is easy to see that in $(\sigma,\tau)$-abelian groups the $(\sigma,\tau)$-commutator is trivial, so there are no inner
$(\sigma,\tau)$-derivations.

\begin{corollary}
 The ${\sigma,\tau}$-derivation algebra $ D_{(\sigma,\tau)}$ of $(\sigma,\tau)$-abelian group coincides with $(\sigma,\tau)$-central derivations.
\end{corollary}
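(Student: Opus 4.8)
The plan is to apply Theorem \ref{th-FC} and then identify the two summands in \eqref{eq-FC-decomposition-theorem} explicitly. First I would observe that a $(\sigma,\tau)$-abelian group is automatically a $(\sigma,\tau)$-$FC$ group: every class $[u]_{\sigma,\tau}$ is a singleton, hence finite, so (under the finite-generation hypothesis inherited from Theorem \ref{th-FC}) the decomposition \eqref{eq-FC-decomposition-theorem} is available. It then suffices to show that the inner summand collapses while the remaining summand is exactly the space of $(\sigma,\tau)$-central derivations.

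For the inner part I would use the defining identity $\sigma(v)u = u\tau(v)$ of a $(\sigma,\tau)$-abelian group. This says precisely that the twisted commutator $\delta_p(x) = p\tau(x) - \sigma(x)p$ vanishes for all $p,x$, so every inner $(\sigma,\tau)$-derivation is the zero map and $Inn(\Gamma)$ is trivial, as already noted in the text preceding the corollary. Hence \eqref{eq-FC-decomposition-theorem} reduces to $D_{(\sigma,\tau)} \cong \bigoplus_{[a]_{(\sigma,\tau)}} Z^*_{(\sigma,\tau)}(a)$, where the sum now runs over all $a \in G$ because each class is a single point.

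Next I would identify each summand concretely. Since $\sigma(z)a = a\tau(z)$ holds for every $z \in G$, the $(\sigma,\tau)$-centralizer is $Z_{(\sigma,\tau)}(a) = G$, so $Z^*_{(\sigma,\tau)}(a) = \textbf{Hom}(G,\mathbb{C})$. To realize this summand by central derivations I would trace the isomorphism $\Psi^{-1}$ of Theorem \ref{th-char-der} on the one-object subgroupoid $\Gamma_{[a]_{\sigma,\tau}}$. Its loops are the morphisms $(\sigma(v)a, v)$ with $v \in G$, and the composition rule \eqref{eq-composition-morph} together with $(\sigma,\tau)$-centrality gives $(\sigma(v_1)a, v_1)\circ(\sigma(v_2)a, v_2) = (\sigma(v_2 v_1)a, v_2 v_1)$, so a character on this loop group is simply a homomorphism $\varphi \in \textbf{Hom}(G,\mathbb{C})$ with $\chi(\sigma(v)a, v) = \varphi(v)$. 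Applying $\Psi^{-1}$ then returns $D(g) = \varphi(g)\sigma(g)a$, which is exactly the $(\sigma,\tau)$-central derivation attached to the $(\sigma,\tau)$-central element $a$ (every element is central here) and the homomorphism $\varphi$; conversely every such central derivation arises this way.

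Assembling these identifications yields $D_{(\sigma,\tau)} \cong \bigoplus_{a \in G} \textbf{Hom}(G,\mathbb{C})$, which is precisely the space of $(\sigma,\tau)$-central derivations, giving the claim. The step I expect to be the main obstacle is the passage between the abstract groupoid description and the closed formula $D(g) = \varphi(g)\sigma(g)a$: one must verify that local finiteness leaves only finitely many one-object subgroupoids active, so that a general $(\sigma,\tau)$-derivation is a genuine finite sum of central derivations and the direct sum in \eqref{eq-FC-decomposition-theorem} is legitimate rather than merely a direct product.
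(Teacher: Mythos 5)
Your proposal is correct and follows essentially the same route as the paper: the paper's one-line proof (a derivation is $(\sigma,\tau)$-central iff its character is supported on a single one-object subgroupoid) implicitly rests on the same two facts you spell out, namely that the twisted commutator vanishes identically so $Inn(\Gamma)=0$, and that each summand $Z^*_{(\sigma,\tau)}(a)=\textbf{Hom}(G,\mathbb{C})$ in \eqref{eq-FC-decomposition-theorem} is realized by the central derivations $D(g)=\varphi(g)\sigma(g)a$ via $\Psi^{-1}$. Your explicit attention to local finiteness and finite generation (needed for the direct sum rather than a product) is a point the paper glosses over but which is already handled in the proof of Theorem \ref{th-FC}.
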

\begin{proof}
	It is easy to see that derivation is $(\sigma,\tau)$-central if and only if it's support contains single element.
\end{proof}

Considering Remark \ref{remark-abelian-st} we note that the case when $G$ is abelian group is significantly different to the case of $(\sigma,\tau)$-abelian group. The case of $\sigma$-derivations for abelian groups was studied in \cite{SongWuXin2015}.

Now we can find criterion of innerness of $(\sigma,\tau)$-derivations in $(\sigma,\tau)-$$FC$ group which is similar to Corollary~\ref{coroll-criterion-triv}.

\begin{corollary}
	For groups obeying conditions of the Theorem \ref{th-FC}, all $(\sigma,\tau)$-derivations are inner if and only if the following condition is satisfied: all $(\sigma,\tau)-$centralizers are such that $\forall a\in G$, the factor-group $Z_{(\sigma,\tau)}(a)/ Z'_{(\sigma,\tau)}(a)$ is a periodic group.
\end{corollary}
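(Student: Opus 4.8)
The plan is to deduce the corollary directly from the decomposition \eqref{eq-FC-decomposition-theorem} of Theorem \ref{th-FC}, in exact parallel with the way Corollary \ref{coroll-criterion-triv} follows from Theorem \ref{th-decomposition}. The essential difference is that under the $(\sigma,\tau)$-$FC$ hypothesis Theorem \ref{th-FC} already guarantees that every quasi-inner $(\sigma,\tau)$-derivation is inner, so the summand $Inn(\Gamma)$ absorbs all of $QInn(\Gamma)$ and the sole obstruction to innerness is carried by $\bigoplus_{[a]} Z^*_{(\sigma,\tau)}(a)$. Since a direct sum of vector spaces vanishes if and only if each summand vanishes, the assertion ``all $(\sigma,\tau)$-derivations are inner'' is equivalent to $Z^*_{(\sigma,\tau)}(a)=0$ for every $(\sigma,\tau)$-conjugacy class, i.e. for every $a\in G$ (the value depends only on the class, because $(\sigma,\tau)$-conjugate elements have conjugate, hence isomorphic, centralizers, as recorded after the definition of $Z_{\sigma,\tau}(u)$).

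First I would recall that $Z^*_{(\sigma,\tau)}(a) = \textbf{Hom}(Z_{(\sigma,\tau)}(a), \mathbb{C})$ is the space of homomorphisms into the additive group $(\mathbb{C},+)$. Because the target is abelian, every such homomorphism annihilates the commutator $Z'_{(\sigma,\tau)}(a)$ and therefore factors through the abelianization, yielding a natural identification
$$
Z^*_{(\sigma,\tau)}(a) \cong \textbf{Hom}\big(Z_{(\sigma,\tau)}(a)/Z'_{(\sigma,\tau)}(a),\, \mathbb{C}\big).
$$
Thus the problem reduces to a statement about the abelian group $A := Z_{(\sigma,\tau)}(a)/Z'_{(\sigma,\tau)}(a)$: its character space $\textbf{Hom}(A,\mathbb{C})$ is trivial precisely when $A$ is periodic.

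The remaining step is this purely module-theoretic equivalence. For the easy direction, any homomorphism from $A$ into the torsion-free group $(\mathbb{C},+)$ must send every element of finite order to $0$, so if $A$ is periodic then $\textbf{Hom}(A,\mathbb{C})=0$. For the converse I would argue contrapositively: if $A$ contains an element $x$ of infinite order, then $\langle x\rangle\cong\mathbb{Z}$, the assignment $x\mapsto 1$ defines a nonzero homomorphism $\langle x\rangle\to\mathbb{C}$, and since $(\mathbb{C},+)$ is a divisible, hence injective, $\mathbb{Z}$-module this homomorphism extends to a nonzero element of $\textbf{Hom}(A,\mathbb{C})$. Combining the two directions with the direct-sum criterion above yields the corollary.

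The main obstacle, and it is a mild one, is this converse direction: it is exactly here that the injectivity (divisibility) of $\mathbb{C}$ as an abelian group is needed to extend a character off a cyclic subgroup, and without invoking this structure one cannot guarantee a nonzero global character from a single element of infinite order. Everything else is formal: the reduction to the summand $\bigoplus_{[a]} Z^*_{(\sigma,\tau)}(a)$ is immediate from Theorem \ref{th-FC}, the factorization through the abelianization is automatic because $\mathbb{C}$ is commutative, and the class-invariance of the periodicity condition follows from the conjugacy of centralizers.
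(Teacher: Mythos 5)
Your proposal is correct and follows essentially the same route as the paper: reduce via the decomposition \eqref{eq-FC-decomposition-theorem} to the vanishing of each summand $Z^*_{(\sigma,\tau)}(a)$, identify this space with $\textbf{Hom}\bigl(Z_{(\sigma,\tau)}(a)/Z'_{(\sigma,\tau)}(a),\mathbb{C}\bigr)$, and invoke the equivalence between periodicity and triviality of that character space. The only difference is that you supply the proof of this last equivalence (via torsion-freeness in one direction and divisibility/injectivity of $(\mathbb{C},+)$ in the other), which the paper simply asserts.
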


Here $Z'_{(\sigma,\tau)}(a)$ is a commutant of a group $Z_{(\sigma,\tau)}(a)$.

\begin{proof}
The prove is similar to prove of Corollary \ref{coroll-criterion-triv}.
	The group $Z_{(\sigma,\tau)}(a)/Z'_{(\sigma,\tau)}(a)$ is naturally abelian. So locally finiteness is equivalent to triviality for each $a\in G$ of the space of group characters $Z_{(\sigma,\tau)}(a)/Z'_{(\sigma,\tau)}(a)$. So from the Theorem \ref{th-FC} we get that all $(\sigma,\tau)$-derivations are quasi-inner.
\end{proof}

\section*{Acknowledgments.}  Andronick Arutyunov was supported the Grants Council of the President of the Russian Federation, project no.~MK-2364.2020.1. The results in Subsection \ref{subsec-Heis} were obtained under the support of the Ministry of Science and Higher Education of the Russian Federation (Goszadaniye 075-00337-20-03, project no. 0714-2020-0005). The results of Aleksandr Alekseev in Subsection \ref{subsec:inneendoms} were obtained with the support of the Russian Science Foundation (grant no. 20-11-20131) in V.A. Trapeznikov Institute of Control Sciences of RAS.

Authors are grateful to Professor Alexander Mishchenko for interest to this work.

\end{document}